\newcommand{\norm}[1]{\left\lVert#1\right\rVert}
\renewcommand{\vec}[1]{\boldsymbol{#1}}
\newcommand{\y}{\mathbf{y}}
\newcommand{\ind}{i}
\newcommand{\Ind}{I}
\newtheorem{theorem}{Theorem}
\newtheorem{remark}{Remark}
\begin{document}

\begin{frontmatter}
\title{Solving parametric elliptic interface problems via interfaced operator network}
\author[1,2]{Sidi Wu}
\author[2]{Aiqing Zhu}
\author[2]{Yifa Tang}
\author[2]{Benzhuo Lu\corref{cor}}
\address[1]{School of Mathematical Sciences, Peking University, Beijing 100871, China}
\address[2]{LSEC, ICMSEC, Academy of Mathematics and Systems Science, Chinese Academy of Sciences, Beijing 100190, China}
\cortext[cor]{Corresponding author. Email addresses: bzlu@lsec.cc.ac.cn}
\date{}

\begin{abstract}
Learning operators mapping between infinite-dimensional Banach spaces via neural networks has attracted a considerable amount of attention in recent years. In this paper, we propose an interfaced operator network (IONet) to solve parametric elliptic interface PDEs, where different coefficients, source terms, and boundary conditions are considered as input features. To capture the discontinuities in both the input functions and the output solutions across the interface, IONet divides the entire domain into several separate subdomains according to the interface and uses multiple branch nets and trunk nets. Each branch net  extracts latent representations of input functions at a fixed number of sensors on a specific subdomain, and each trunk net  is responsible for output solutions on one subdomain. Additionally, tailored physics-informed loss of IONet is proposed to ensure physical consistency, which greatly reduces the training dataset requirement and makes IONet effective without any paired input-output observations inside the computational domain. Extensive numerical studies demonstrate that IONet outperforms existing state-of-the-art deep operator networks in terms of accuracy and versatility.

\end{abstract}
\end{frontmatter}
\textbf{Keywords}: Parametric elliptic interface problems; Interfaced operator network; Operator regression; Mesh-free method.

\section{Introduction}
Elliptic interface problems have widespread applications across various fields, including fluid mechanics \cite{sussman1999efficient,fadlun2000combined}, materials science \cite{liu2020moment,wang2019petrov}, electromagnetics \cite{hesthaven2003high}, biomimetics \cite{lu2008recent,ji2018finite}, and flow in porous media \cite{philip1970flow}. Accurate modeling and rapid evaluation of these differential equations are critical in both scientific research and engineering applications. Many computational tasks arising in science and engineering often involve repeated evaluation of the outputs of an expensive forward model for many statistically similar inputs. These tasks, known as parametric PDE problems, encompass various areas such as inverse problems, control and optimization, risk assessment, and uncertainty quantification \cite{khoo2021solving,zhu2019physics}. When dealing with parametric PDEs with discontinuous coefficients across certain interfaces, i.e., parametric interface problems, the low global regularity of the solution and the irregular geometry of the interface give rise to additional challenges, particularly for problems with non-smooth interfaces containing geometric singularities such as sharp edges, tips, and cusps.

Consider an open and bounded domain $\Omega \subset\mathbb{R}^d$ with a Lipschitz boundary $\partial\Omega$. The domain $\Omega$ is separated into two disjoint subdomains, $\Omega_1$ and $\Omega_2$, by an interface $\Gamma$. A sketch of the computational domain considered in 2D is shown in Fig. \ref{fig: computational domain}. Then parametric second-order linear elliptic interface problems are of the form:
\begin{subequations}  \label{eq: interface pde}
\begin{align}
     -\nabla\cdot(a\nabla u)+b u &=  f,\quad \text{in} \ \Omega\setminus\Gamma, \label{eq:1A}\\
    \llbracket u\rrbracket &=g_D,\quad  \text{on} \ \Gamma \label{eq:1B},\\
    \llbracket a\nabla u\cdot \mathbf{n} \rrbracket &=g_N, \quad  \text{on} \ \Gamma \label{eq:1C}, \\
    u&=h,\quad \text{on} \ \partial\Omega, \label{eq:1D}
\end{align}
\end{subequations}
where $\mathbf{n}$ denotes the outward unit normal vectors of the interface $\Gamma$ (from $\Omega_1$ to $\Omega_2$), $\llbracket \cdot \rrbracket$ denotes the jump across the interface, for a point $\mathbf{x}^\gamma\in \Gamma$,
\begin{equation*}
\begin{aligned}
\llbracket u\rrbracket (\mathbf{x}^\gamma):&= \lim_{\mathbf{x}\in \Omega_2 \atop \mathbf{x} \to \mathbf{x}^\gamma } u(\mathbf{x}) - \lim_{\mathbf{x}\in \Omega_1 \atop \mathbf{x} \to \mathbf{x}^\gamma } u(\mathbf{x}),\\
\llbracket a\nabla u\cdot \mathbf{n} \rrbracket (\mathbf{x}^\gamma):&=  \lim_{\mathbf{x}\in \Omega_2 \atop \mathbf{x} \to \mathbf{x}^\gamma } a(\mathbf{x})\nabla  u(\mathbf{x}) \cdot \mathbf{n} - \lim_{\mathbf{x}\in \Omega_1 \atop \mathbf{x} \to \mathbf{x}^\gamma } a(\mathbf{x})\nabla  u(\mathbf{x}) \cdot \mathbf{n}.
\end{aligned}
\end{equation*}
Here,  { $g_D(\mathbf{x}): \Gamma\rightarrow \mathbb{R}$ and $g_N(\mathbf{x}): \Gamma\rightarrow \mathbb{R}$ are the interface conditions, and $h(\mathbf{x}): \partial\Omega\rightarrow \mathbb{R}$ is the boundary condition;} the coefficient $a(\mathbf{x}):\Omega\rightarrow \mathbb{R}$ is continuous and positive in each of the subdomains but discontinuous across the interface; the coefficient $b(\mathbf{x}):\Omega\rightarrow \mathbb{R}$ and source $f(\mathbf{x}):\Omega\rightarrow \mathbb{R}$ are continuous in each of the subdomains but may be discontinuous across the interface. Additionally, we will also consider a nonlinear example, i.e., replacing $bu$ here with $b(u)$.
The latent solution $u(\mathbf{x}): \Omega\rightarrow \mathbb{R}$ to this problem typically has higher regularity in each subdomain, but lower global regularity across the whole domain, even with discontinuities at the interface.
Solving these parametric elliptic interface problems requires
learning the solution operator that maps variable PDE parameters such as the coefficient $a(\mathbf{x})$ and the source term $f(\mathbf{x})$ directly to the corresponding solution $u$. This paper introduces a novel operator network for approximating operators involving discontinuities in both input and output functions. We then demonstrate its effectiveness in approximating the solution operator of parametric elliptic interface problems.

\begin{figure}[htbp]
	\centering
	\scalebox{0.8}{\includegraphics{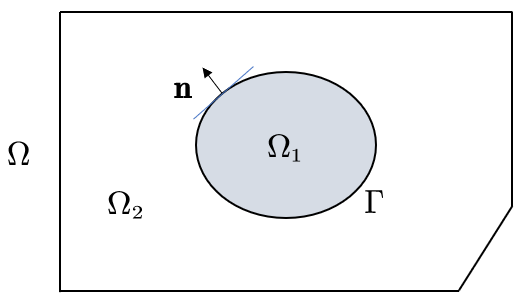}}
	\caption{Domain $\Omega$, its subdomains $\Omega_1$, $\Omega_2$. The interface $\Gamma$ divides $\Omega$ into two disjoint subdomains.}
\label{fig: computational domain}
\end{figure}

Classical numerical methods for solving elliptic interface problems can be roughly divided into two categories: interface-fitted methods and interface-unfitted methods. The first type of approach is suitable for solving PDE problems defined in complex domains.  The methods in this category include classical finite element method (FEM) \cite{babuvska1970finite,bramble1996finite,chen2020bilinear}, boundary element method (BEM) \cite{zhang2019dashmm}, weak Galerkin method \cite{mu2016new}, and so on. To maintain optimum convergence behavior, these methods require the mesh surface to be aligned with the interface. This alignment ensures that interface conditions are correctly applied, enhancing the accuracy of numerical solutions. However, generating interface-fitted meshes for irregular domains or interfaces could result in significant computational costs \cite{liu2018efficient}, especially when high accuracy is required. To alleviate the burden of mesh generation, many works employ interface-unfitted meshes (e.g., a uniform Cartesian mesh) to discretize the computational domain and enforce interface conditions by modifying finite difference stencils or finite element bases near the interface. For instance, the immersed boundary method (IBM) \cite{peskin2002immersed}, the immersed interface method (IIM) \cite{leveque1994immersed}, the immersed finite element method \cite{chen2009adaptive}, the ghost fluid method (GFM) \cite{fedkiw1999non} and its improvement (xGFM) \cite{egan2020xgfm}, the cartesian grid finite volume approach (FVM) \cite{bochkov2020solving,thacher2023high}, the matched interface and boundary method \cite{xia2011mib}, the extended finite element methods (XFEM) \cite{babuvska2012stable,liu2020interface}, and references therein. In general, the numerical solution of these methods becomes more accurate with mesh refinement, but also more time consuming.

Besides the mesh-based methods, there are also numerous efforts focusing on mesh-free numerical methods for interface problems, such as direct meshless local Petrov-Galerkin method \cite{taleei2014direct}, the global RBF-QR collection method \cite{gholampour2021global}, the local RBF meshless methods \cite{ahmad2020local}, and the meshless method based on pascal polynomials and multiple-scale approach \cite{orucc2021efficient}.
Alternatively, there is a growing interest in utilizing neural network-based methods to solve elliptic interface problems. For instance, \cite{wang2020mesh} employed a shallow neural network to remove the inhomogeneous boundary conditions and developed a deep Ritz-type approach to solve the interface problem with continuous solutions.
An important development in this direction is the combination of deep learning and domain decomposition methods due to the observation that the solution to the interface problem is typically piece-wise continuous. The solution to Eq. \eqref{eq: interface pde} can be approximated by minimizing a loss function derived from either the least squares principle \cite{he2022mesh,wu2022interfaced} or the variational principle \cite{guo2022deep,qi2023dirichlet}. Moreover, adaptively setting appropriate penalty weights among different terms in the loss function could improve accuracy \cite{wu2022interfaced,berg2018aunified,jagtap2020extended}. And specially designed neural network structures, such as incorporating multi-scale features \cite{liu2020MscaleDNN,ying2023multi} and augmenting extra feature input \cite{hu2022discontinuity,lai2022shallow,tseng2022cusp}, are also able to further enhance the performance of neural models.

Although these numerical methods have been shown to be effective to some extent, they are only employed to solve a given instance of the elliptic interface problem \eqref{eq: interface pde}, where the coefficient functions $a(\mathbf{x})$ and $b(\mathbf{x})$, the forcing term $f(\mathbf{x})$,  the interface conditions $g_D(\mathbf{x})$ and $g_N(\mathbf{x})$, and the boundary condition $h(\mathbf{x})$ are given in advance.
In other words, these methods treat a PDE with different parameters as different tasks, each of which needs to be solved end-to-end, which is computationally expensive and time-consuming. To address these challenges, one approach is to employ a reduced-order model that leverages a set of high-fidelity solution snapshots to construct rapid emulators \cite{lucia2004reduced,quarteroni2015reduced}. However, the validity of this method relies on the assumption that the solution set is contained approximately in a low-dimensional manifold, which could potentially lead to compromised accuracy and generalization performance \cite{majda2018strategies,ye2023meta}.

Recently, as an emerging paradigm in scientific machine learning, several operator neural networks, such as PDE-Nets \cite{long2019pde}, deep operator network (DeepONet) \cite{lu2021learning} and Fourier neural operator (FNO) \cite{li2021fourier}, have been developed to directly learn the solution mapping between two infinite-dimensional function spaces. Although prediction accuracy may be limited, the ability of neural networks to learn from data makes them particularly well suited to this task. Such methods have great potential for developing fast forward and inverse solvers for PDE problems, and have shown good performance in building surrogate models for many types of PDEs, including the Burgers' equation \cite{lu2021learning,jin2022mionet}, Navier-Stokes equations \cite{li2021fourier,lu2022comprehensive}, Darcy flow \cite{lu2022comprehensive}, diffusion-reaction PDE \cite{wang2021learning}, and so on.

Despite the aforementioned success, these operator learning methods typically have difficulty effectively capturing the discontinuities of input and output functions due to the following two reasons. Theoretically, their approximation theory usually assumes that the input and output functions are continuous \cite{lu2021learning}. Practically, to handle input functions numerically, we typically need to discretize the input functions and evaluate them at a set of locations. This approach may overlook the discontinuity of the true input functions, as the input functions were expected to be continuous. In addition, the output functions are represented by a network which is a continuous approximator. However, for interface problems, the global regularity of the coefficients and the solutions is usually very low, even discontinuous \cite{littman1963regular}. These limit the ability of operator networks to accurately represent and learn the complex behavior associated with interface problems.

To address these limitations, in this paper we propose a novel mesh-free method for approximating the solution operator of parametric elliptic interface problems. Different from existing operator networks, we divide the whole domain into several separate subdomains according to the interface, and leverage multiple branch nets and trunk nets. Specifically, each branch net encodes the input function at a fixed number of sensors in each subdomain, and each trunk net is responsible for output solutions in each subdomain. Such an architecture allows the model to accommodate irregularities in input functions and solutions. In addition, tailored physics-informed loss is proposed to ensure physical consistency, which greatly reduces the requirement for training datasets and makes the network effective without any paired input-output observations in the interior of the computational domain. The proposed method circumvents mesh generation and numerical discretization at the interface(s), thus easily handling problems in irregular domains. And the model can be trained only once for fast simulation with different input functions. Herein, we name this neural operator as the Interfaced Operator Network (IONet). Numerical results show that IONet exhibits better accuracy, as well as generalization properties for various input parameters, compared with state-of-the-art neural models.

The rest of this paper is organized as follows: In Section \ref{Preliminaries}, we review the basic idea of the operator network and the DeepONet method. In Section \ref{main methodology}, we introduce the proposed interfaced neural network in detail. Then, in Section \ref{numerical examples}, we investigate the performance of the proposed methods in several typical numerical examples. Finally, we conclude the paper and discuss some future directions in Section \ref{conclusions}.

\section{Learning operators with neural networks\label{Preliminaries}}

In this section, we briefly introduce the DeepONet model architecture \cite{lu2021learning} and its two extensions, the Multi-input operator network (MIONet) \cite{jin2022mionet} and the Physics-informed DeepONet (PI-DeepONet) \cite{wang2021learning}, for learning nonlinear operators between infinite function spaces.

Let $\mathcal{V}$ and $\mathcal{U}$ be two Banach spaces, and let $\mathcal{G}$ be an operator that maps between these two infinite-dimensional function spaces, i.e., $\mathcal{G}: \mathcal{V}\rightarrow\mathcal{U}$. We assume that for each $v(\mathbf{y}): \mathbf{y}\rightarrow \mathbb{R}$ in $\mathcal{V}$, there exists a unique corresponding output function $u$ in $\mathcal{U}$ that can be represented as $\mathcal{G}(v)(\mathbf{x}): \mathbf{x}\rightarrow \mathbb{R}$. Analogously, in the context of parametric PDE problems, $\mathcal{V}$ and $\mathcal{U}$ are denoted as the input function space and the solution space, respectively.
Following the original works of \cite{lu2021learning,wang2021learning},
an unstacked DeepONet $\mathcal{G}_{\mathbf{\theta}}$ is trained to approximate the target solution operator $\mathcal{G}$, where $\mathcal{G}_{\mathbf{\theta}}$ prediction of a function (an input parameter) $v\in\mathcal{V}$ evaluated at a point $\mathbf{x}$ can be expressed as
\begin{equation*}
\begin{aligned}
\mathcal{G}_{\mathbf{\theta}}(v)(\mathbf{x}) &=
\underbrace{\mathcal{N}_{b}(v(\mathbf{y}_1), v(\mathbf{y}_2), \cdots ,v(\mathbf{y}_m))^T}_{branch\ net}
\underbrace{\mathcal{N}_{t}(\mathbf{x})}_{trunk\ net}
+\underbrace{b_0}_{bias}\\
&=\sum_{k=1}^K b_k t_k+b_0,
\end{aligned}
\end{equation*}
where $\mathbf{\theta}$ denotes all the trainable parameters, i.e., the set consisting of the parameters in the branch net  $\mathcal{N}_{b}$ and the trunk net  $\mathcal{N}_{t}$ and the bias $b_0\in\mathbb{R}$. Here, $[b_1, b_2, \cdots , b_K]^T\in\mathbb{R}^K$ denotes the output of $\mathcal{N}_{b}$ as a feature embedding of the input function $v$, $[t_1, t_2, \cdots , t_K]^T\in\mathbb{R}^K$ represents the output of $\mathcal{N}_{t}$, and $\{\mathbf{y}_1, \mathbf{y}_2, \cdots , \mathbf{y}_m\}$ is a collection of fixed point locations referred to as ``sensors", where we discretize the input function $v$.
DeepONets are capable of approximating arbitrary continuous operators \cite{chen1995universal, lu2021learning}, making them a powerful tool in the field of scientific computing.

MIONet \cite{jin2022mionet} extends the architecture and approximation theory of DeepONet to the case of operators defined on multiple Banach spaces. Let $\mathcal{G}$ be a multi-input operator defined on the product of Banach spaces:
\begin{equation*}
\mathcal{G}:\mathcal{V}_1\times\mathcal{V}_2\times\cdots \times\mathcal{V}_q\rightarrow\mathcal{U},
\end{equation*}
where $\mathcal{V}_1, \mathcal{V}_2, \cdots , \mathcal{V}_q$ are $q$ different input Banach spaces that can be defined on different domains, and $\mathcal{U}$ denotes the output Banach space. Then, when we employ a MIONet $\mathcal{G}_\mathbf{\theta}$ to approximate the operator $\mathcal{G}$, for a given input function $(v_1, v_2,\cdots , v_q)\in\mathcal{V}_1\times\mathcal{V}_2\times\cdots \times\mathcal{V}_q$, the prediction of $\mathcal{G}_{\mathbf{\theta}}(v_1, v_2,\cdots , v_q)$
at a point $\mathbf{x}$ is formulated as
\begin{equation*}
\begin{aligned}
\mathcal{G}_{\mathbf{\theta}}(v_1, v_2,\cdots , v_q) (\mathbf{x})&=
\mathcal{S}\left(\underbrace{\mathcal{N}_{b_1}(\vec{v_1})}_{branch_1}\odot\underbrace{\mathcal{N}_{b_2}(\vec{v_2})}_{branch_2}\odot\cdots \odot\underbrace{\mathcal{N}_{b_q}(\vec{v_q})}_{branch_{q}}\odot
\underbrace{\mathcal{N}_{t}(\mathbf{x})}_{trunk}\right)
+\underbrace{b_0}_{bias}\\
&=\sum_{k=1}^K t_k\prod \limits_{i=1}^q b^i_k +b_0.
\end{aligned}
\end{equation*}
Here, $\mathcal{S}$ is the sum of all the components of a vector, and $\odot$ represents the Hadamard product. Each input function $v_i$ is projected onto finite-dimensional spaces $\mathbb{R}^{m_i}$ as $\vec{v_i}:=[v_i(\mathbf{y}^{i}_{1}), v_i(\mathbf{y}^{i}_{2}), \cdots ,v_i(\mathbf{y}^{i}_{m_i})]^T$ in the same manner as in DeepONet, where $\{\mathbf{y}^{i}_{ j}\}_{j=1}^{m_i}$ is the set of sensors in the domain of $v_i$. Similarly,
$[b_1^i, b_2^i,\cdots ,b_K^i]$ and $[t_1, t_2,\cdots ,t_K]$ denote the output of the $i$-th branch net  $\mathcal{N}_{b_i}(\vec{v_i})$ and the trunk net  $\mathcal{N}_{t}(\mathbf{x})$, respectively.

In the framework of vanilla DeepONet, a data-driven (DD) approach is used to train the network. Specifically, the training dataset consists of paired input-output observations, and the trainable parameters $\mathbf{\theta}$ can be identified by minimizing the following empirical loss function:
\begin{equation*}
\text{Loss}(\mathbf{\theta})=\frac{1}{NP}\sum_{n=1}^N\sum_{p=1}^P\left| \mathcal{G}_{\mathbf{\theta}}(v^n)(\mathbf{x}_{n,p})-\mathcal{G}(v^n)(\mathbf{x}_{n,p})\right|^2,
\end{equation*}
where $\{v^n\}_{n=1}^N$ denotes $N$ input functions sampled from the parameter space $\mathcal{V}$. For each input function of DeepONet, the training data points $\{\mathbf{x}_{n,p}\}_{p=1}^P$ are randomly sampled from the computational domain of $\mathcal{G}(v^n)$ and can be set to vary for different $n$.

Given that the DeepONet architecture provides a continuous approximation of the target functions that is independent of the resolution, the derivatives of the output function can be computed during training. This import feature motivated the work of PI-DeepONet \cite{wang2021learning}, where the trainable parameters can be optimized by minimizing the residuals of the governing equations and the corresponding boundary conditions through the use of automatic differentiation \cite{baydin2018automatic}. Specifically, consider a generic parametric PDE expressed as:
\begin{equation*}
\label{general_PDE}
\begin{aligned}
\mathcal{L}(v,u)&=0, \ &&\text{in} \ \Omega,\\
 u&=h, \  &&\text{on} \ \partial\Omega,
\end{aligned}
\end{equation*}
where $v$ and $u$ denote the input function and latent solution, respectively. Then, the physics-informed loss function of PI-DeepONet can be formulated as
\begin{equation*}
    \text{Loss}(\mathbf{\theta})= \lambda_r\text{Loss}_{r}(\mathbf{\theta})+\lambda_b\text{Loss}_{b}(\mathbf{\theta}).
\end{equation*}
Here, $\lambda_r$ and $\lambda_b$ are non-negative weights, the loss term
\begin{equation*}
    \text{Loss}_{r}(\mathbf{\theta}) = \frac{1}{NP_r}\sum_{n=1}^N\sum_{p=1}^{P_r} \left| \mathcal{L}(v^n, \mathcal{G}_{\mathbf{\theta}}(v^n))(\mathbf{x}_{n,p}^r) \right|
\end{equation*}
forces the operator network to satisfy the underlying physical constraints, and
\begin{equation*}
    \text{Loss}_{b}(\mathbf{\theta}) = \frac{1}{NP_b}\sum_{n=1}^N\sum_{p=1}^{P_b}\left| \mathcal{G}_{\mathbf{\theta}}(v^n)(\mathbf{x}_{n,p}^b)-h(\mathbf{x}_{n,p}^b) \right|
\end{equation*}
penalizes the violation of the boundary conditions,
where $\{\mathbf{x}_{n,p}^r\}_{p=1}^{P_r}$ and $\{\mathbf{x}_{n,p}^b\}_{p=1}^{P_b}$
denote the training data points randomly sampled from the interior and the boundary of the domain $\Omega$, respectively.

\section{Interfaced operator network}\label{main methodology}
In this section, we discuss neural network-based methods for numerically solving parametric interface problems. The main idea of our new method is to approach the solution operator through multiple suboperators while remaining consistent with the potential physical constraints. To simplify the explanation, we mainly present our method for the case of two subdomains. Note that this setting can be easily generalized to a multi-domain scenario, depending on the number of distinct domains involved. Specifically, we consider Eq. \eqref{eq: interface pde} as a parametric interface problem of general form. In the following, we illustrate our method with the example of learning the solution operator $\mathcal{G}$ that maps the coefficient function $a(\mathbf{x})$ to the solution $u(\mathbf{x})$ of Eq. \eqref{eq: interface pde}, i.e., $\mathcal{G}: a(\mathbf{x}) \rightarrow u(\mathbf{x})$.

\subsection{Network architecture of IONet}
To preserve the inherent discontinuity of interface problems, we decompose the computational domain into two subdomains according to the interface and leverage two operator networks that share some parameters, each of which is responsible for the solution in one subdomain. In particular, the IONet architecture is given as follows:
\begin{equation}\label{eq:defino}
\mathcal{G}_{\mathbf{\theta}}(a)(\mathbf{x}) = \left\{\begin{aligned}
&\mathcal{G}_{\mathbf{\theta}}^1(a)(\mathbf{x}),\quad \text{if}\ \mathbf{x} \in \Omega_1,\\
&\mathcal{G}_{\mathbf{\theta}}^2(a)(\mathbf{x}),\quad \text{if}\ \mathbf{x} \in \Omega_2,\\
\end{aligned}\right.
\end{equation}
where $a$ is the input function and $\mathbf{x}$ denotes the location where the output function is evaluated. Note that input functions are discretized and evaluated at a set of sensors typically. To retain the irregularity of the input function on the interface, we divide the set of sensors according to the interface and use two branch nets, denoted as $\mathcal{N}_{b_1}$ and $\mathcal{N}_{b_2}$, to extract latent representations of input functions on the corresponding subdomains. Similar to the vanilla DeepONet \cite{lu2021learning}, within each suboperator $\mathcal{G}^i_{\mathbf{\theta}}$, we use a trunk net  denoted as $\mathcal{N}^i_{t}$ to extract continuous input coordinates where the output functions are evaluated. Finally, following the MIONet \cite{jin2022mionet}, we merge the outputs of all the sub-networks through a Hadamard product and a summation, followed by the addition of a bias in the last stage. More specifically, the suboperator in $\mathcal{G}_{\mathbf{\theta}}$ (\ref{eq:defino}) is constructed as follows:
\begin{equation}\label{eq:defino2}
\begin{aligned}
\mathcal{G}^i_{\mathbf{\theta}}(a) (\mathbf{x})&=
\mathcal{S}\left(\underbrace{\mathcal{N}_{b_1}(a(\y_1^1),\cdots, a(\y^1_{m_1}))}_{branch_1}\odot\underbrace{\mathcal{N}_{b_2}(a(\y^2_1),\cdots, a(\y^2_{m_2}))}_{branch_2}\odot
\underbrace{\mathcal{N}^i_{t}(\mathbf{x})}_{trunk}\right)
+\underbrace{b_0^i}_{bias}\\
&=\sum_{k=1}^K{t_k^i b_{1k} b_{2k}} +b_0^i.
\end{aligned}
\end{equation}
Here, $\mathbf{\theta}$ denotes the trainable  parameters in this architecture. For $i=1,2$, $\{\y_j^i\}_{j=1}^{m_i}$ represents the collection of sensors for evaluating $a(\mathbf{x})$ in subdomain $\Omega_i$, $[b_{i1}, b_{i2},\cdots ,b_{iK}]$ and $[t_1^i, t_2^i,\cdots ,t_K^i]$ denote the output features of the branch nets $\mathcal{N}_{b_i}$ and the trunk net  $\mathcal{N}^i_{t}$, respectively. The network architecture of IONet is schematically visualized on the left side of Fig. \ref{fig: ino neural architecture}. To demonstrate the capability and performance alone, we apply the simplest feedforward neural networks (FNNs) as the branch and trunk nets in this paper, and we note that other neural networks such as ResNet and CNN can be chosen as the sub-networks in IONet according to specific problems.
\begin{figure}[htbp]
	\centering
	\scalebox{0.48}{\includegraphics{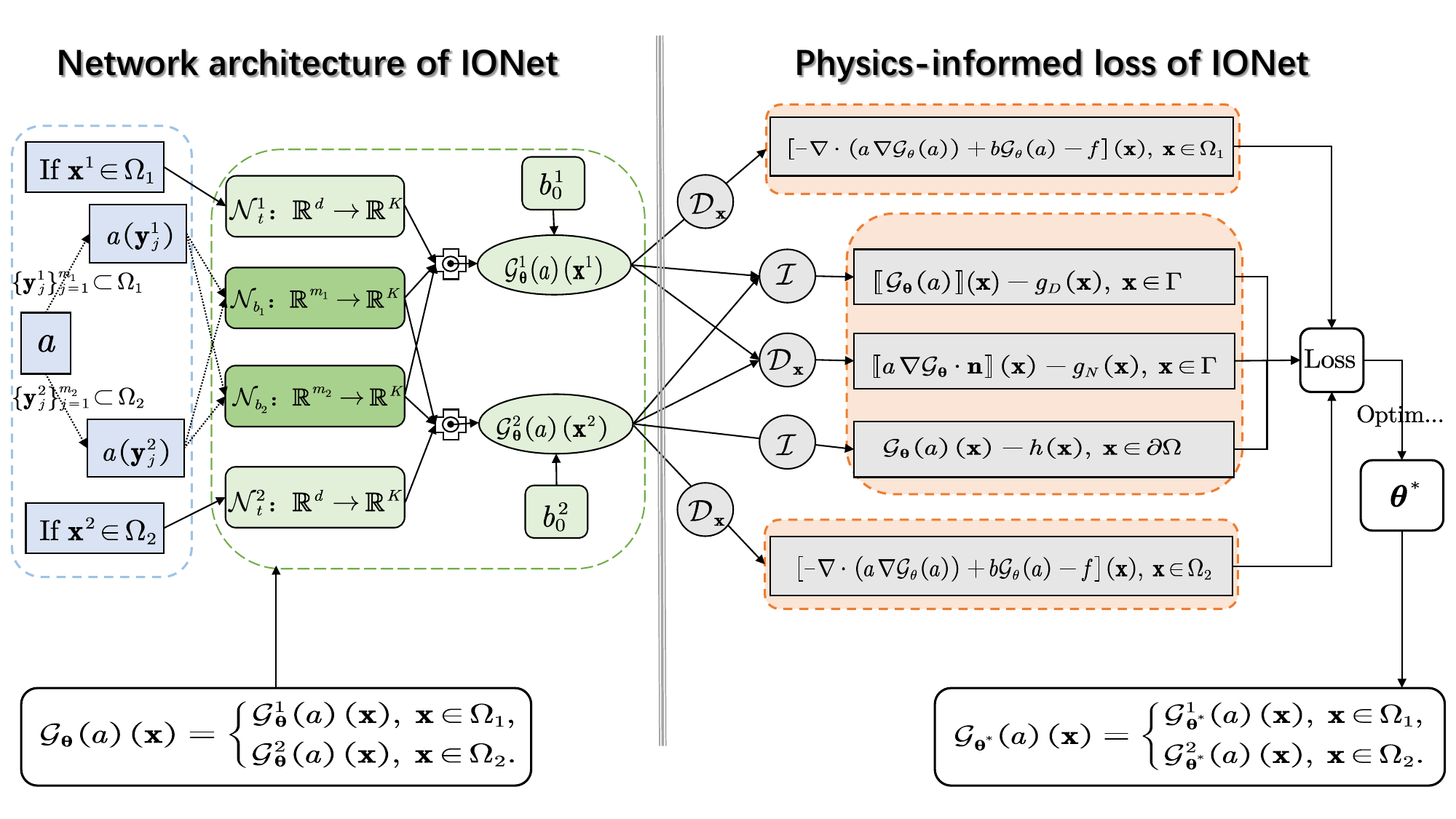}}
	\caption{A schematic diagram of the IONet for solving the parametric elliptic interface problem by minimizing the physics-informed loss function. Here, the input function is the coefficient $a(\mathbf{x})$.}
\label{fig: ino neural architecture}
\end{figure}

The IONet structure can be easily generalized to a multi-domain scenario. Next, we show that IONet is able to approximate arbitrary continuous operators with discontinuous inputs and outputs. For later analysis, we define the following space
\begin{equation*}
X (\Omega) = \bigcap_{\ind=1}^{\Ind} H^{2} (\Omega_{\ind})\bigcap H^{0}(\Omega)
\end{equation*}
equipped with the norm
\begin{equation*}
\norm{u}_{X(\Omega)}= \sum_{\ind=1}^{\Ind}\norm{u}_{H^{2} (\Omega_{\ind})}.
\end{equation*}
Then, the approximation theorem of IONet is given as follows.

\begin{theorem}\label{thm:approximation of ino}
Let $\Omega \subset \mathbb{R}^d$ be a bounded domain, $\Omega_{\ind}$ with $\ind=1,\cdots, \Ind-1$ be disjoint open domains and $\Omega_{\Ind}=\Omega\setminus \bigcup_{\ind=1}^{\Ind-1}\Omega_{\ind}$. Assume $\mathcal{G}: \bigcap_{\ind=1}^{\Ind} C(\Omega_{\ind})\bigcap L^{\infty}(\Omega) \rightarrow X(\Omega)$ is a continuous operator and $T \subset \bigcap_{\ind=1}^{\Ind} C(\Omega_{\ind})\bigcap L^{\infty}(\Omega)$ is a compact set. Then for any $\varepsilon>0$, there exist positive integers $m_{\ind}$, $K$, tanh FNNs $\mathcal{N}_{b_{\ind}}: \mathbb{R}^{m_{\ind}}\rightarrow \mathbb{R}^K$, $\mathcal{N}^{{\ind}}_{t}: \mathbb{R}^{d}\rightarrow \mathbb{R}^K$, and $\y^{\ind}_1,\cdots, \y^{\ind}_{m_{\ind}} \in \Omega_{\ind}$ with $\ind=1,\cdots, \Ind$, such that
\begin{equation*}
\sup_{a\in T} \norm{\mathcal{G}(a)(\cdot) - \mathcal{S} \left( \mathcal{N}_{b_1}(a(\y_1^1),\cdots, a(\y^1_{m_1})) \odot \cdots \odot \mathcal{N}_{b_{\Ind}}(a(\y^{\Ind}_1),\cdots, a(\y^{\Ind}_{m_{\Ind}})) \odot \mathcal{N}^{\ind}_{t}(\cdot) \right) }_{H^2(\Omega_{\ind})} \leq \varepsilon,
\end{equation*}
where $\mathcal{S}$ is the summation of all the components of a vector, and $\odot$ is the Hadamard product.
\end{theorem}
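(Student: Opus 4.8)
The plan is to adapt the universal approximation theorem of MIONet \cite{jin2022mionet} to the interface setting, with two essential modifications: treating the restrictions of the single input function $a$ to the separate subdomains as the multiple inputs of a multi-input operator, and upgrading the output approximation from the standard uniform topology to the stronger $H^2(\Omega_\ind)$ topology on each subdomain.

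First I would reduce the problem to a multi-input operator. Since $\{\Omega_\ind\}_{\ind=1}^{\Ind}$ partition $\Omega$, each $a$ in the domain space is identified with the tuple of its restrictions $(a|_{\Omega_1},\dots,a|_{\Omega_\Ind})$, where $a|_{\Omega_\ind}\in C(\Omega_\ind)$, and compactness of $T$ transfers to compactness of the projected sets $T_\ind:=\{a|_{\Omega_\ind}:a\in T\}$ in each $C(\Omega_\ind)$. Fixing an index $\ind$, I work with the component operator $\mathcal{G}_\ind(a):=\mathcal{G}(a)|_{\Omega_\ind}$, which is continuous as a map into the separable Hilbert space $H^2(\Omega_\ind)$.

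Next I would carry out the sensor encoding and the finite-rank decomposition. Using compactness of each $T_\ell$ together with the equicontinuity furnished by the Arzel\`a--Ascoli theorem, I can place finitely many sensors $\{\y_j^\ell\}_{j=1}^{m_\ell}\subset\Omega_\ell$ densely enough that the sensor-evaluation map approximately determines functions in $T_\ell$, which controls the encoding error uniformly. Because $H^2(\Omega_\ind)$ carries an orthonormal, hence Schauder, basis, it possesses the approximation property, so the continuous component operator admits a finite-rank tensor-product approximation $\mathcal{G}_\ind(a)\approx\sum_{k=1}^{K}\bigl(\prod_{\ell=1}^{\Ind} g_{\ell k}(a|_{\Omega_\ell})\bigr)\,u_k$ with $g_{\ell k}$ continuous on $T_\ell$ and $u_k\in H^2(\Omega_\ind)$, the residual being small in the $H^2(\Omega_\ind)$ norm. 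The branch coefficients $g_{\ell k}$, viewed as continuous functions of the sensor values after encoding, are then approximated by tanh FNNs $\mathcal{N}_{b_\ell}$ through the classical universal approximation theorem on the compact sensor-value sets, while the output modes $u_k$ are approximated in $H^2$ norm by the components of a tanh trunk net $\mathcal{N}^\ind_t$. Assembling the pieces through the triangle inequality, and absorbing the Hadamard-product and summation structure, yields the stated bound.

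I expect the last $H^2$ upgrade to be the main obstacle. Unlike the original MIONet argument, which operates in the sup or $L^2$ topology, the reconstruction here must converge in the stronger $H^2(\Omega_\ind)$ norm, so the classical DeepONet density result cannot be invoked as a black box; instead I must appeal to a Sobolev-norm neural network approximation theorem. This is precisely why the smooth tanh activation is indispensable: it permits simultaneous approximation of a target function together with its derivatives up to second order, a property unavailable for non-smooth activations such as ReLU. Care is also needed to ensure that the finite-rank factorization respects this stronger topology, which is where the approximation property of the Hilbert space $H^2(\Omega_\ind)$ enters.
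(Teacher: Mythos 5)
Your proposal is correct and follows essentially the same route as the paper's proof: reduce to a multi-input operator acting on the tuple of restrictions $(a|_{\Omega_1},\dots,a|_{\Omega_\Ind})$, invoke the MIONet tensor-product decomposition (whose hypotheses are met because sensor evaluations correspond to Faber--Schauder coefficients and the output space has a Schauder basis), approximate the branch coefficients by tanh FNNs on the bounded sensor-value sets and the trunk modes in $H^2$ via the Sobolev-norm tanh approximation theorem of \cite{de2021approximation}, and conclude by the triangle inequality with a controlled product error. The only cosmetic difference is that you decompose each component operator $\mathcal{G}_\ind$ into $H^2(\Omega_\ind)$ separately, whereas the paper applies the decomposition once to $\tilde{\mathcal{G}}$ with values in $X(\Omega)$ and then restricts subdomain by subdomain.
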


\begin{proof}
The proof can be found in \ref{app:proof of approximation}.
\end{proof}

\begin{remark}
There exist various continuous operator $\mathcal{G}: \bigcap_{\ind=1}^{\Ind} C(\Omega_{\ind})\bigcap L^{\infty}(\Omega) \rightarrow X(\Omega)$. For example, for interface problem \eqref{eq: interface pde}, the operator mapping from the source term $f\in \bigcap_{\ind=1}^{\Ind} C(\Omega_{\ind})\bigcap L^{\infty}(\Omega)$ to the solution $u \in X(\Omega)$ is continuous, due to the estimate \cite{chen1998finite} that $\norm{u}_X \leq C\norm{f}_{L^2(\Omega)} \leq C\sum_{i=1}^I\norm{f}_{C(\Omega_i)}$.
\end{remark}

\subsection{Loss function of IONet}

Similar to DeepONet, a data-driven approach can be used to train IONet and optimize the parameters $\mathbf{\theta}$ by minimizing the following mean square error loss:
\begin{equation}
\begin{aligned}
\label{eq: ddloss of ino}
L_{operator}(\mathbf{\theta})&=\frac{1}{N_o P_o} \sum_{n=1}^{N_o}\sum_{p=1}^{P_o}\left| \mathcal{G}_{\mathbf{\theta}} (a^n_o)(\mathbf{x}_{n,p}^o) -  \mathcal{G}(a^n_o)(\mathbf{x}_{n,p}^o) \right|^2,\\
\end{aligned}
\end{equation}
where $\{a^n_o\}_{n=1}^{N_o}$ denotes $N_o$ input functions sampled from the parameter space; for $n=1, \cdots, N_o$, the training data points $\{\mathbf{x}_{n,p}^o\}_{p=1}^{P_o} \subset \Omega$ denotes the set of locations to evaluate the output function and can be set to vary for different $n$; $\mathcal{G}(a^n_o)(\mathbf{x}_{n,p}^o)$ and $\mathcal{G}_{\mathbf{\theta}}(a^n_o) (\mathbf{x}_{n,p}^o)$ are evaluated values of the output functions of the solution operator $\mathcal{G}$ and IONet $\mathcal{G}_{\mathbf{\theta}}$ at location $\mathbf{x}_{n,p}^o$ when $a^n_o$ is the input function. This type of training method relies on the assumption that there is sufficient labeled data
\begin{equation*}
\left\{\left(a^n_o(\y_1^1),\cdots, a^n_o(\y^1_{m_1}),\; a^n_o(\y^2_1),\cdots, a^n_o(\y^2_{m_2}),\; \mathbf{x}_{n,p}^o,\; \mathcal{G}(a^n_o)(\mathbf{x}_{n,p}^o)\right) \right\}_{n=1, \cdots, N_o,\ p=1, \cdots, P_o}
\end{equation*}
to train the model. However, the costs associated with experimental data acquisition and high-quality numerical simulation are generally expensive. In many practical scenarios, we are inevitably faced with limited or even intractable training data. In the following, we introduce the physics-informed loss function for IONet.

By the definitions of IONet (\ref{eq:defino}) and (\ref{eq:defino2}), the output function of $\mathcal{G}_{\mathbf{\theta}}$, such as
\begin{equation*}
\mathcal{G}_{\mathbf{\theta}}(a^n) (\mathbf{x})= \left\{
\begin{aligned}
&\mathcal{S}\left(\mathcal{N}_{b_1}(a^n(\y_1^1),\cdots, a^n(\y^1_{m_1}))\odot\mathcal{N}_{b_2}(a^n(\y^2_1),\cdots, a^n(\y^2_{m_2}))\odot
\mathcal{N}^1_{t}(\mathbf{x})\right)
+b_0^1,\ \text{if }\mathbf{x} \in \Omega_1,\\
&\mathcal{S}\left(\mathcal{N}_{b_1}(a^n(\y_1^1),\cdots, a^n(\y^1_{m_1}))\odot\mathcal{N}_{b_2}(a^n(\y^2_1),\cdots, a^n(\y^2_{m_2}))\odot
\mathcal{N}^2_{t}(\mathbf{x})\right)
+b_0^2,\ \text{if }\mathbf{x} \in \Omega_2,\\
\end{aligned}\right.
\end{equation*}
has a continuous representation in each subdomain. Provided that the trunk net $\mathcal{N}^i_{t}$ are smooth enough, the derivatives of $\mathcal{G}_{\mathbf{\theta}}(a^n)$ at $\mathbf{x}_{n,p}^i$ can be easily obtained by automatic differentiation \cite{baydin2018automatic}. Inspired by PINN \cite{raissi2019physics} and PI-DeepONet, for given parametric elliptic interface problems, we define
\begin{equation}
    L_{r_i}(\mathbf{\theta}) := \sum_{n=1}^N\sum_{p=1}^{P_i} \bigg|- \nabla \cdot \left(a^n(\mathbf{x}_{n,p}^i) \nabla \mathcal{G}_{\mathbf{\theta}}(a^n)(\mathbf{x}_{n,p}^i)\right) +b(\mathbf{x}_{n,p}^i)\mathcal{G}_{\mathbf{\theta}}(a^n)(\mathbf{x}_{n,p}^i)-f(\mathbf{x}_{n,p}^i)\bigg|^2
    \label{eq: loss term Li}
\end{equation}
and
\begin{equation*}
L_b(\mathbf{\theta}):=\sum_{n=1}^N\sum_{p=1}^{P_b} \left|\mathcal{G}_{\mathbf{\theta}}(a^n)(\mathbf{x}_{n,p}^b)-h(\mathbf{x}_{n,p}^b)\right|^2,
\end{equation*}
where $\{\mathbf{x}_{n,p}^i\}_{p=1}^{P_i}$ with $i=1,2$ and $\{\mathbf{x}_{n,p}^b\}_{p=1}^{P_b}$
are randomly sampled from the subdomain $\Omega_i$ and its boundary $\partial \Omega$, respectively.
Let $L_{\Gamma}(\mathbf{\theta}) = L_{\Gamma_D}+L_{\Gamma_N}$, where
\begin{equation*}
L_{\Gamma_D}(\mathbf{\theta}) =\sum_{n=1}^N\sum_{p=1}^{P_\gamma} \left|\llbracket \mathcal{G}_{\mathbf{\theta}} (a^n)\rrbracket (\mathbf{x}_{n,p}^{\gamma}) - g_D(\mathbf{x}_{n,p}^{\gamma})\right|^2
=\sum_{n=1}^N\sum_{p=1}^{P_\gamma} \left|\mathcal{G}_{\mathbf{\theta}}^2(a^n)(\mathbf{x}_{n,p}^{\gamma}) - \mathcal{G}_{\mathbf{\theta}}^1(a^n)(\mathbf{x}_{n,p}^{\gamma})-g_D(\mathbf{x}_{n,p}^{\gamma})\right|^2,
\end{equation*}
and
\begin{equation*}
\begin{aligned}
L_{\Gamma_N}(\mathbf{\theta})&=\sum_{n=1}^N\sum_{p=1}^{P_\gamma} \left| \llbracket a^n \nabla \mathcal{G}_{\mathbf{\theta}}(a^n)\cdot \mathbf{n} \rrbracket (\mathbf{x}_{n,p}^{\gamma})-g_N(\mathbf{x}_{n,p}^{\gamma})\right|^2\\
&=\sum_{n=1}^N\sum_{p=1}^{P_\gamma} \left| a_2^n(\mathbf{x})\nabla \mathcal{G}_{\mathbf{\theta}}^2(a^n)(\mathbf{x}_{n,p}^{\gamma})\cdot\mathbf{n} -  a_1^n(\mathbf{x})\nabla \mathcal{G}_{\mathbf{\theta}}^1(a^n)(\mathbf{x}_{n,p}^{\gamma})\cdot\mathbf{n}-g_N(\mathbf{x}_{n,p}^{\gamma})\right|^2.
\end{aligned}
\end{equation*}
Here, $\{\mathbf{x}_{n,p}^{\gamma}\}_{p=1}^{P_\gamma}$ represents a set of training points sampled from the interface $\Gamma$ for the $n$-th input function. Then, the physics-informed loss function for IONet is formulated as follows:
\begin{equation}
\label{eq: empirical loss function}
    L_{physics}(\mathbf{\theta})=\lambda_1L_{r_1}(\mathbf{\theta})
    +\lambda_2L_{r_2}(\mathbf{\theta})
    +\lambda_3L_{\Gamma}(\mathbf{\theta})
    +\lambda_4L_b(\mathbf{\theta}),
\end{equation}
where $L_{r_i}(\mathbf{\theta})$  \eqref{eq: loss term Li} with $i=1,2$ are to approximately restrict the IONet output function to obey the given governing PDE (\ref{eq:1A}), while $L_b(\mathbf{\theta})$ and $L_{\Gamma}(\mathbf{\theta})$ penalize IONet for violating the boundary condition (\ref{eq:1D}) and the interface conditions (\ref{eq:1B}) and (\ref{eq:1C}), respectively. Such physics-informed loss function of IONet is schematically depicted on the right side of Fig. \ref{fig: ino neural architecture}. By incorporating physics constraints to ensure that the IONet output function aligns with the given interface PDE \eqref{eq: interface pde}, the proposed IONet can effectively learn the solution operator for parametric interface problems, even in the absence of labeled training data (excluding boundary and interface conditions). If both data and PDEs are available, we combine the loss functions \eqref{eq: ddloss of ino} and \eqref{eq: empirical loss function} and minimize the following composite loss function to obtain the parameter $\mathbf{\theta}$ of IONet:
\begin{equation}
L(\mathbf{\theta}) = \lambda_p L_{physics}(\mathbf{\theta})+\lambda_o L_{operator}(\mathbf{\theta}).
\label{eq: loss function con}
\end{equation}

\section{Numerical Results\label{numerical examples}}
In this section, the proposed IONet is tested on a range of parametric elliptic interface problems. Throughout all benchmarks, the branch nets and the trunk nets are FNNs. Particularly when $L_{physics}$ is included in (\ref{eq: loss function con}), i.e., $\lambda_p\neq 0$, they are FNNs with smooth activation function Tanh, due to the necessity for high-order derivatives. All operator network models are trained via stochastic gradient descent using Adam optimizer with default settings. The learning rate is set to exponential decay with a decay-rate of 0.95 per $\# \text{Epochs}/100$ iterations, where $\# \text{Epochs}$ denotes the maximum number of optimization iterations. Unless otherwise specified, the training data points used to evaluate the loss function are scattered points randomly sampled in the computational domain, while those used to evaluate the output solution of neural models are equidistant grid points.
After training, the average relative $L^2$ error between the reference solution operator $\mathcal{G}$ and the numerical solution operator $\mathcal{G}_{\theta}$ is measured as
\begin{equation*}
L^2(\mathcal{G}, \mathcal{G}_{\theta}) = \frac{1}{N} \sum_{n=1}^{N} \sqrt{\frac{\int_{\Omega} \left|\mathcal{G}(a^n_{test})(x)-\mathcal{G}_{\theta}(a^n_{test})(x)\right|^2 dx}{\int_{\Omega} \left|\mathcal{G}(a^n_{test})(x)\right|^2 dx}},
\end{equation*}
where $N$ denotes the number of test input functions $\{a^n_{test}\}_{n=1}^N$, and the integration is computed by the Monte Carlo method. For simplicity, IONet using the loss function \eqref{eq: loss function con} with $\lambda_p=1$ and $\lambda_o=0$ is referred to as ``PI-IONet", while IONet using the loss function \eqref{eq: loss function con} with $\lambda_p=0$ and $\lambda_o=1$ is denoted as ``DD-IONet".
All experiments are tested on one NVIDIA Tesla V100 GPU.

\subsection{Parametric elliptic interface problems in one dimension}

\hypertarget{example1}{\textbf{Example} 1}.
As the first example,  we investigate the effectiveness of the proposed method in handling non-zero interface conditions in the elliptic interface problem \eqref{eq: interface pde}, defined on the interval $\Omega=[0,1]$ with an interface point at $x_\gamma=0.5$:
\begin{equation}  \label{eq: 1d coefficient}
\begin{aligned}
 -\nabla\cdot(a(x)\nabla u(x)) &=  0,\quad x \  \in\  \Omega,\\
g_D(x_\gamma)=1, g_N(x_\gamma)&=0,
 \\
u(0)=1,\ u(1)&=0.
\end{aligned}
\end{equation}
Here, our goal is to learn a solution operator $\mathcal{G}$ that maps the discontinuous coefficient function $a(x)$ to the latent solution $u(x)$ that is explicitly discontinuous across the interface.

To make the input function
\begin{equation*}
a(x) = \left\{\begin{aligned}
& a_1(x),\quad x \in \Omega_1:=[0,0.5]\\
& a_2(x),\quad x \in \Omega_2:=(0.5,1]\\
\end{aligned}\right.
\end{equation*}
strictly positive, we let $a_i(x) = \Tilde{a}_i(x)-\min_x\Tilde{a}_i(x)+1$ with $i=1,2$, where $\Tilde{a}_i(x)$ is randomly sampled from a mean-zero Gaussian random field (GRF) with a radial basis function (RBF) kernel
\begin{equation*}
k_{l}(x_1, x_2)=\text{exp}\left(-\frac{\|x_1-x_2\|^2}{2l^2}\right)
\end{equation*}
using a length scale $l=0.25$ (see the left panel of Fig. \ref{fig: example 1d coef results} for an illustration).
We randomly sample $10,000$ and $1,000$ input functions $a(x)$ for training and testing, respectively. The sensor of the input function consists of $100$ equidistant grid points in the interval $[0,1]$. For each input function, we solve Eq. \eqref{eq: 1d coefficient} on a uniform mesh of size $1000$ using the matched interface and boundary (MIB) method with second-order accuracy \cite{xia2011mib} to obtain the reference solution and paired input-output training data. The test error of all neural models is measured on the same mesh of size $1000$.

In this example, we  investigate the performance of DD-IONet and PI-IONet as well as two state-of-the-art neural models, namely vanilla DeepONet (DD-DeepONet) \cite{lu2021learning} and physics-informed DeepONet (PI-DeepONet) \cite{wang2021learning}, in solving Eq. \eqref{eq: 1d coefficient} with variable coefficients $a(x)$. For PI-DeepONet, the neural network $\mathcal{G}_{\mathbf{\theta}}$ is trained by minimizing a physics-informed loss function of the form $L_{physics} (\mathbf{\theta}) =  \lambda_rL_{r} (\mathbf{\theta}) + \lambda_3L_{\Gamma} (\mathbf{\theta}) + \lambda_4 L_{b} (\mathbf{\theta})$, where $L_{\Gamma} =  L_{\Gamma_D} + L_{\Gamma_N}$, $\lambda_r = (\lambda_1+\lambda_2)/2$, and weights $\lambda_i$ with $i=1,2,3$ and 4 are those in PI-IONet. Note that the output function space of DeepONet is a subset of the space of continuous functions, implying  that $\llbracket \mathcal{G}_{\mathbf{\theta}}(a)(x_\gamma)\rrbracket=0$ holds for any input function $a$. Hence, we approximate the interface loss functions $L_{\Gamma_D}$ and $L_{\Gamma_N}$ in PI-DeepONet by difference schemes
\begin{equation*}
    L_{\Gamma_D} (\mathbf{\theta}) = \sum_{n=1}^N\Big|\mathcal{G}_{\mathbf{\theta}}(a^n) (x^\gamma+\epsilon)-\mathcal{G}_{\mathbf{\theta}}(a^n) (x^\gamma-\epsilon)-1\Big|^2
\end{equation*}
and
\begin{equation}
\begin{aligned}
L_{\Gamma_N}(\mathbf{\theta})&=\sum_{n=1}^N\Big| \llbracket a^n \nabla \mathcal{G}_{\mathbf{\theta}}(a^n)\cdot \mathbf{n} \rrbracket (x^{\gamma})\Big|^2\\
&=\sum_{n=1}^N \left|{a_2^n(x^{\gamma})}\frac{\mathcal{G}_{\mathbf{\theta}}(a^n)(x^{\gamma}+\epsilon)-\mathcal{G}_{\mathbf{\theta}}(a^n)(x^{\gamma})}{\epsilon}
 - {a_1^n(x^{\gamma})}\frac{\mathcal{G}_{\mathbf{\theta}}(a^n)(x^{\gamma})-\mathcal{G}_{\mathbf{\theta}}(a^n)(x^{\gamma}-\epsilon)}{\epsilon}
 \right|^2
\end{aligned}
\label{eq: pi-deeponet approximate interface condition n}
\end{equation}
with $\epsilon=10^{-5}$ in practice. In all cases, the neural networks are trained after $4\times 10^4$ parameter updates. The network architecture details and training costs are shown in Table  \ref{tab: 1d variable coef}.

\begin{figure}[ht]
	\centering
	\scalebox{0.6}{\includegraphics{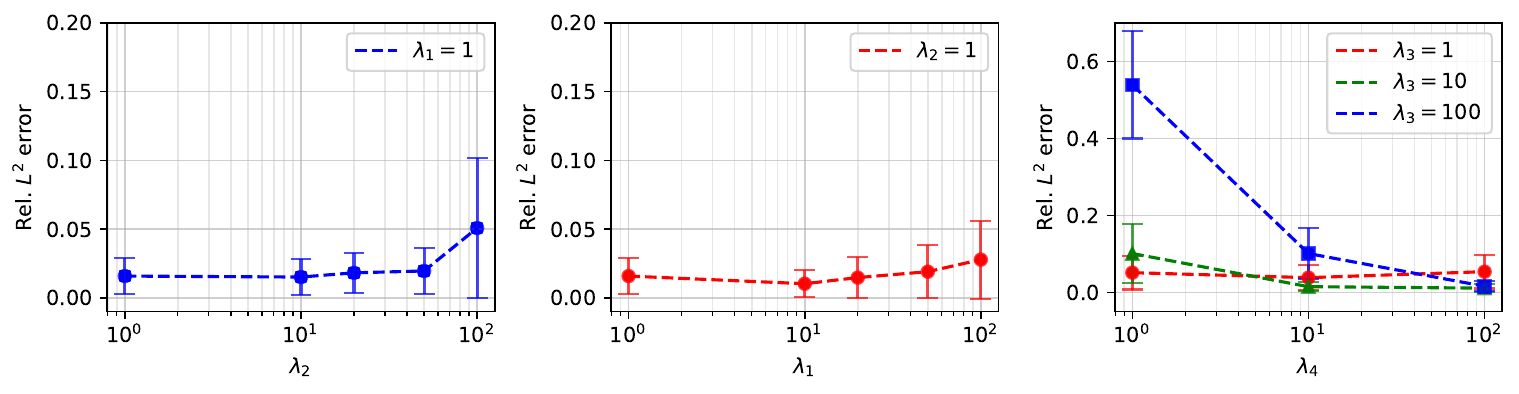}}
\caption{The mean and one standard deviation of relative $L^2$ error for PI-IONet with different weights in the physics-informed loss function \eqref{eq: empirical loss function}. Left: $\lambda_1=1$ and $\lambda_3=\lambda_4=100$. Middle: $\lambda_2=1$ and $\lambda_3=\lambda_4=100$. Right:   $\lambda_1=\lambda_2=1$.}
\label{fig: example 1d coef results lam}
\end{figure}

We first investigate the effect of the weights in loss function \eqref{eq: empirical loss function} on the accuracy of PI-IONet. It can be observed in Fig. \ref{fig: example 1d coef results lam} that the weights of interface and boundary loss terms (i.e. $\lambda_3$ and $\lambda_4 $) have more significant impacts on the numerical results compared with those of PDE residuals (i.e., $\lambda_1$ and $\lambda_2$). This phenomenon could be attributed to the fact that the investigated interface problem exhibits low-contrast. In the following, we fix the weights in physics-informed loss function of PI-IONet as $\lambda_1=\lambda_2=1$, $\lambda_3=10$, and $\lambda_4=100$.

\begin{table}[htbp]
\centering
\fontsize{10}{8}\selectfont
\begin{threeparttable}
\caption{Test errors and training costs for DD-DeepONet, DD-IONet, PI-DeepONet, and PI-IONet. The error corresponds to the relative $L^2$ error, recorded in the form of mean $\pm$ standard deviation based on all test input functions in Example \protect\hyperlink{example1} {1}.
}
\begin{tabular}{c|cccccc}
\toprule
 Models &Activation&Depth & Width  & \#Parameters&    $L^2(\mathcal{G}_{\theta}, \mathcal{G})$ & Training time (hours)\\
\midrule
DD-DeepONet & ReLU &5 & 140& 172K & 9.82e-2$\pm$1.71e-2  &0.10 \\
DD-IONet & ReLU &5 & 100& 172K & 3.95e-3$\pm$1.43e-3 & 0.17  \\
\midrule
PI-DeepONet &Tanh & 5& 140 & 172K & 4.70e-1$\pm$1.02e-1 & 0.36   \\
PI-IONet& Tanh& 5& 100 & 172K & 8.30e-3 $\pm$7.92e-3 & 0.44 \\
\bottomrule
\end{tabular}
\label{tab: 1d variable coef}
\end{threeparttable}
\end{table}

\begin{figure}[ht]
	\centering
	\scalebox{0.6}{\includegraphics{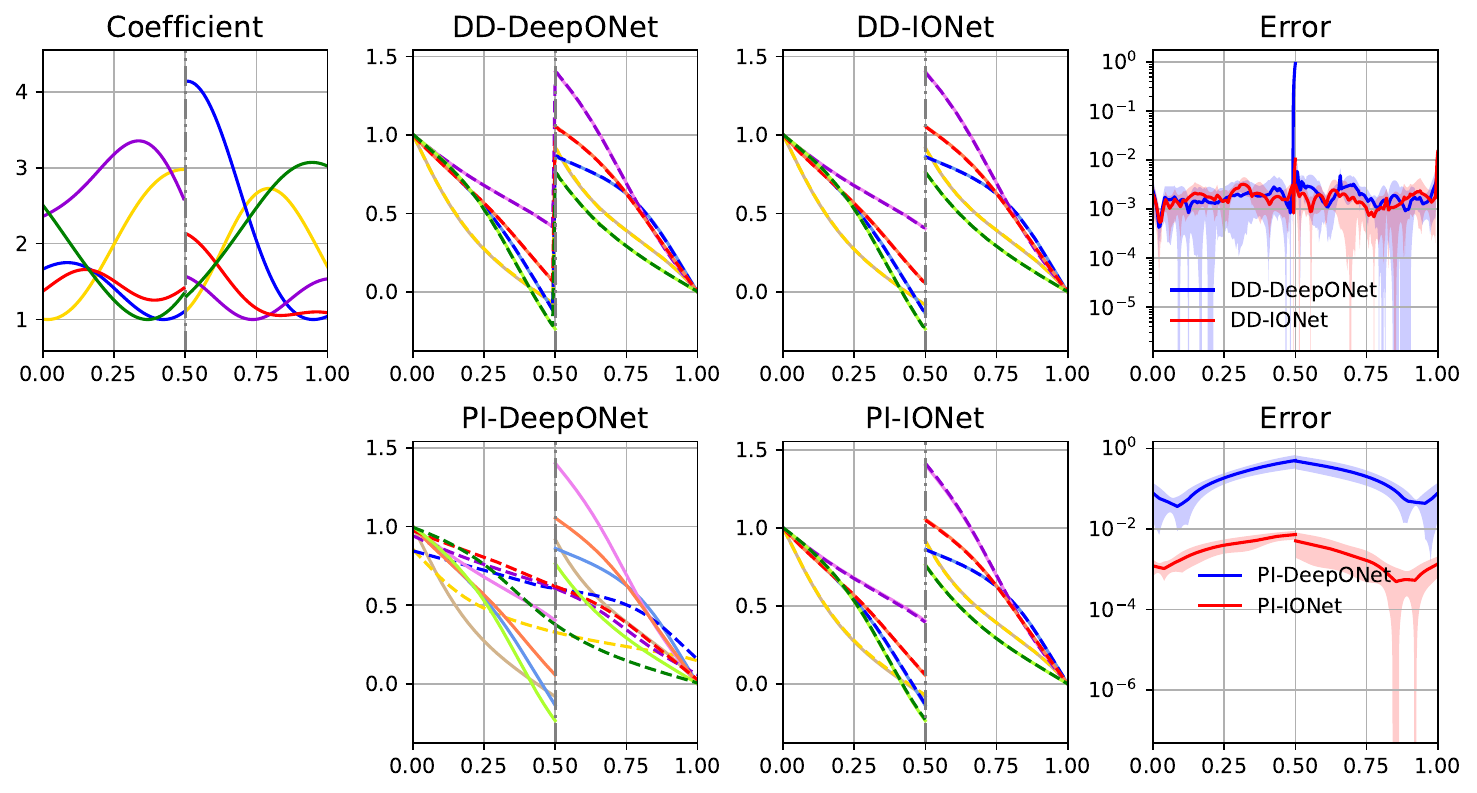}}
	\caption{Left column: Five input functions randomly selected from the test set (distinguished by different colors). Second and third columns: The reference solutions (solid lines) versus the numerical solutions (dashed lines) of DD-DeepONet, DD-IONet, PI-DeepONet, and PI-IONet. Fourth column: The mean and one standard deviation of the numerical solutions, averaged over these 5 test examples.}
\label{fig: example 1d coef results}
\end{figure}

Table \ref{tab: 1d variable coef} reports the relative $L^2$ errors between the reference solution and the numerical solution for DD-DeepONet, DD-IONet, PI-DeepONet, and PI-IONet. Under different training frameworks, it can be observed that the accuracy of DD-IONet is significantly superior to that of DD-DeepONet with the same paired input-output training data, while the error of PI-DeepONet is about 50 times that of PI-IONet when trained by minimizing the physics-informed loss functions. Furthermore, one can also see that DD-IONet outperforms PI-IONet, as the latter is trained without relying on any high-quality paired training data but instead through solving a highly complex optimization problem involving derivatives. In terms of the training time for neural models, as shown in this table, training physics-informed models (PI-DeepONet and PI-IONet) generally takes longer than training data-driven models (DD-DeepONet and DD-IONet). It is mainly due to the fact that physics-informed models require the computation of the PDE and interface residuals via automatic differentiation, and the loss terms are computed in a serial manner.

Fig. \ref{fig: example 1d coef results} shows a comparison between the reference and the numerical solutions for five randomly sampled input functions from the test dataset. The second column gives the numerical results of DD-DeepONet and PI-DeepONet. It can be observed that the numerical solution of DD-DeepONet agrees well with the reference solution away from the interface ($x_\Gamma=0.5$), but there are large errors near the interface (refer to the error plot in the first row), while PI-DeepONet fails to yield accurate results. The main reason is due to the fact that the output function space of DeepONet is a subset of continuous function space, which limits its effectiveness in capturing discontinuities in solution functions to interface problems.
In addition, we remark that DD-DeepONet is trained with high-quality paired training data using least squares regression, enabling the continuous output function of DeepONet to approximate the discontinuous output of the target operator point by point. However, PI-DeepONet employs a physics-informed loss function involving derivatives and the loss of interface conditions is approximated using difference schemes, exacerbating the challenge of handling discontinuities. This results in the numerical accuracy of DD-DeepONet being superior to that of PI-DeepONet.
The numerical results for DD-IONet and PI-IONet are displayed in the third column, where it can be seen that the numerical solutions naturally maintain the discontinuous nature of the numerical solution at the interface, demonstrating excellent agreement with the reference solutions (see the error plot in the fourth column). These numerical results show that IONet is more adaptable to the irregularities in the input function and the solution, which exhibits a greater ability to represent the solutions of interface problems compared with the conventional DeepONet architecture.

Next, we show that IONet is able to be extended to finite number of subdomains (greater than two). Consider Eq. \eqref{eq: 1d coefficient} with three subdomains, i.e.,
\begin{equation}  \label{eq: 1d coefficient multi-subdomains}
\begin{aligned}
 -\nabla\cdot(a(x)\nabla u(x)) &=  0,\quad x \  \in\  \Omega,\\
g_D(x_{\gamma_1})=1, g_N(x_{\gamma_1})&=0,\\
g_D(x_{\gamma_2})=-\frac{1}{2}, g_N(x_{\gamma_2})&=0,\\
u(0)=1,\ u(1)&=0,
\end{aligned}
\end{equation}
where $x_{\gamma_1}=0.3$ and $x_{\gamma_1}=0.7$.

\begin{figure}[ht]
	\centering
	\scalebox{0.75}{\includegraphics{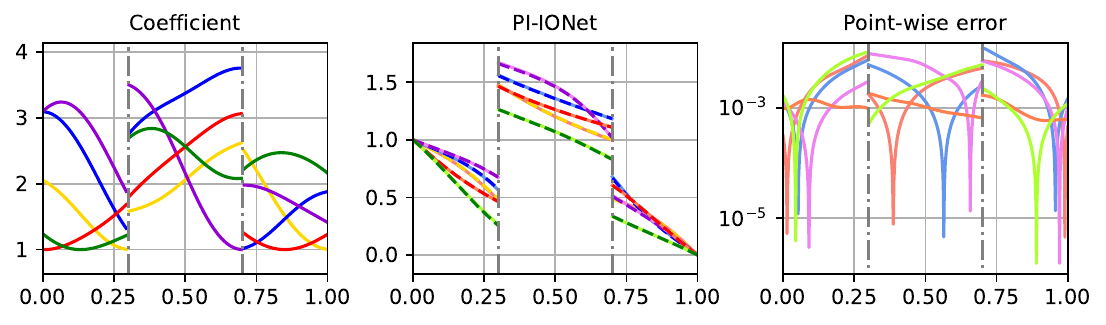}}
	\caption{Left: Five input functions randomly selected from the test set (distinguished by different colors). Middle: Reference solutions (solid lines) versus the numerical solutions (dashed lines) of PI-IONet. Right: Absolute point-wise errors over the whole domain. The gray point-dashed lines represent the location of the interfaces.}
\label{fig: example 1d coef results three}
\end{figure}
We approximate the solution operator $\mathcal{G}$ of Eq. \eqref{eq: 1d coefficient multi-subdomains} by PI-IONet. The chosen model architecture and other hyperparameters remain consistent with those used for two subdomain scenarios, except for the use of three branch nets and three trunk nets. Fig. \ref{fig: example 1d coef results three} shows the numerical results of PI-IONet. The final mean of relative $L^2$ error over five test input functions in this figure is measured at $3.83\times10^{-3}$. These results demonstrate that IONet also performs well in multi-subdomain scenarios.

\hypertarget{example2}{\textbf{Example} 2}. Next, we consider Eq. \eqref{eq: interface pde} with zero
interface conditions, defined on $\Omega=[0,1]$ with an interface point at $x^{\gamma}=0.5$:
\begin{equation}  \label{eq: 1d source term}
\begin{aligned}
 -\nabla\cdot\left(a(x)\nabla u(x)\right) + b(x)u(x) &=  f(x),\quad x \  \in\  \Omega,\\
g_D(x_\gamma)
=0,\ g_N(x_\gamma)&=0, \\
u(0)= h_0,\ u(1)&= h_1.
\end{aligned}
\end{equation}

From the setting of the interface conditions, we know that the latent solution $u$ to Eq. \eqref{eq: 1d source term} is continuous,  while its derivative may be discontinuous on the interface due to the different values of $a$ in different subdomains. In this example, our goal is to learn an operator $\mathcal{G}$ that maps the source term $f(x)$, boundary condition $h(x)$, and coefficients $a(x)$ and $b(x)$ to the solution $u$, i.e.,
\begin{equation}
 \mathcal{G}: \left(f(x),h(x),a(x), b(x)\right)\rightarrow u(x).
 \label{eq: 1d multi-input solution operator}
\end{equation}

To obtain the dataset, we randomly sample $10,000$ and $1,000$ sets of input functions $(f, h, a, b)$ for training and testing, respectively. Specifically, the input functions $f_1:=f|_{\Omega_1}$ and $f_2:=f|_{\Omega_2}$ are independently sampled from a zero-mean GRF with length scales $l_1=0.2$ and $l_1=0.1$, respectively. The coefficients $b_i:=b|_{\Omega_i}$ with $i=1,2$ are independently sampled via $b(x) = \Tilde{b}(x)-\min_x\Tilde{b}(x)+1$, where $\Tilde{b}(x)$ is randomly sampled from a zero-mean GRF with length scale $l=0.25$. The coefficient $a(x)$ is modeled as a piece-wise constant function, with $a|_{\Omega_1}=a_1$ and $a|_{\Omega_2} =a_2$, where $a_1$ and $a_2$ are sampled from uniform distributions over the intervals $[0.5, 1]$ and $[2, 3]$, respectively. Additionally, we sample $h_0$ and $h_1$ from uniform distributions over the intervals $[-0.1,0]$ and $[0,0.1]$, respectively, to set the variable boundary conditions $h(0) =h_0$ and $h(1)=h_1$. In this setting, it is easy to verify the existence and uniqueness of the solution to problem \eqref{eq: 1d source term}. For each set of input, the reference solution $u(x)$ is obtained by the MIB method with a uniform mesh of size $1000$. The test error of all neural models is measured on the same mesh of size $1000$.

\begin{figure}[h]
	\centering
	\scalebox{0.60}{\includegraphics{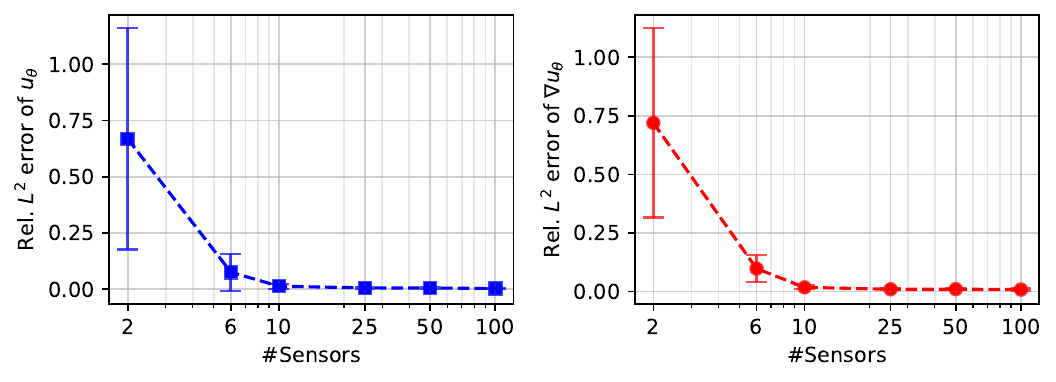}}
	\caption{The mean and one standard deviation of test $L^2$ errors for PI-IONet with varying numbers of sensors.}
 \label{fig: example2 error vs sensor}
\end{figure}

In this case, we investigate the performance of PI-IONet and PI-DeepONet in solving Eq. \eqref{eq: 1d source term}
without any paired input-output training data. In order to adapt to multiple input functions, for PI-IONet, we first divide the set of sensors $\{y_i\}_{i=1}^{n}$ into two subsets $\{y_i^1\}_{i=1}^{n_1}$ and $\{y^2_i\}_{i=1}^{n_2}$ with $n_1+n_2=n$ according to the interface. Then we employ $8$ branch nets to extract latent representations of the $4$ input functions on the corresponding sub-domain. Specifically, the inputs for branch nets are $[f(y^1_1),\cdots, f(y^1_{n_1})]$, $[f(y^2_1),\cdots, f(y^2_{n_2})]$, $[b(y^1_1),\cdots,b(y^1_{n_1})]$, $[b(y^2_1),\cdots,b(y^2_{n_2})]$, $[a_1]$, $[a_2]$, $[h_0]$ and $[h_1]$, respectively. For PI-DeepONet, we employ the most direct approach by concatenating the four input functions together, i.e., $[f(y_1),\cdots , f(y_n), b(y_1), \cdots ,b(y_n),a_1,a_2,h_0,h_1]$, to serve as a single input for the branch net.

We first utilize the PI-IONet network architecture, employing branch and trunk nets consisting of 5-layer FNN with 70 units per layer, to approximate the solution operator \eqref{eq: 1d multi-input solution operator}. The neural network is trained by minimizing the loss function \eqref{eq: empirical loss function} with $\lambda_1=\lambda_2=1$, $\lambda_3=10$, and $\lambda_4=100$, over $4\times10^4$ iterations of optimization. Fig. \ref{fig: example2 error vs sensor} illustrates the variation in relative $L^2$ errors of the numerical solution and its derivative with respect to the number of sensors (denoted as $\#\text{Sensors}$). Note that, in each case, the set of sensors consists of equidistant grid points in the interval $[0,1]$. As we can see from this figure, the test errors generally decrease as $\#\text{Sensors}$ increases until it is sufficient to capture all the necessary frequency information for the input function.

\begin{table}[ht]
\centering
\fontsize{9}{7}\selectfont
\begin{threeparttable}
\caption{Test errors and training costs for PI-IONet and PI-DeepONet. The error corresponds to the relative $L^2$ error, recorded in the form of mean $\pm$ standard deviation based on all test input functions in Example   \protect\hyperlink{example2} {2}.}
\begin{tabular}{c|ccccccc}
\toprule
 Model &Activation&Depth & Width  & \#Parameters&  $L^2(\mathcal{G}_{\theta}, \mathcal{G})$ &  $L^2(\nabla\mathcal{G}_{\theta}, \nabla\mathcal{G})$ & Training time (hours)\\
 \midrule
PI-IONet & Tanh &5 & 70 & 213K & 3.48e-3$\pm$ 3.05e-3 &   8.84e-3$\pm$4.51e-3 &  0.76 \\
PI-DeepONet & Tanh & 5 & 151 & 214K & 2.96e-1$\pm$ 1.99e-1 &  3.06e-1$\pm$ 1.97e-1 &  0.30 \\
\bottomrule
\end{tabular}
\label{tab: example 1d rhs error}
\end{threeparttable}
\end{table}

\begin{figure}[htbp]
	\centering
	\scalebox{0.45}{\includegraphics{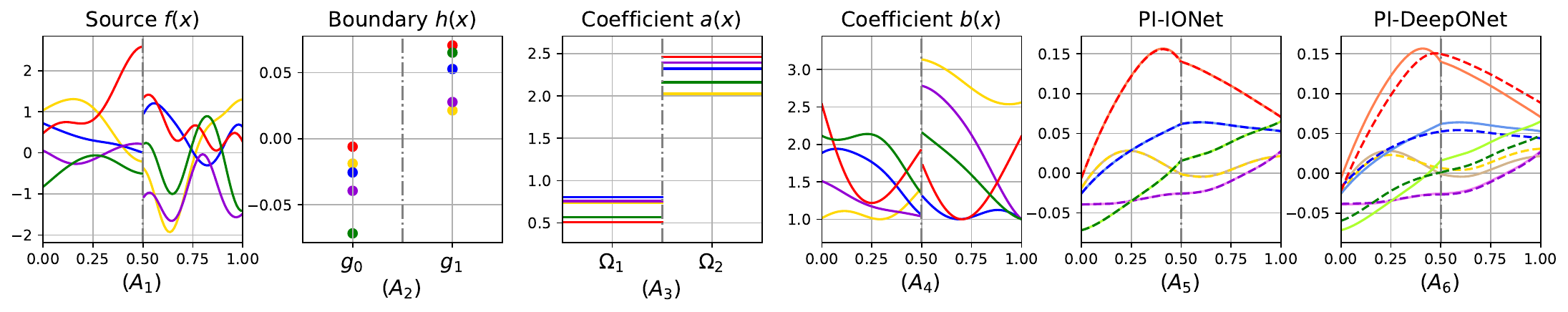}}
	\caption{An illustration of randomly sampled input functions (distinguished by different colors): Source term $f$ ($A_1$), boundary condition $h$ ($A_2$), coefficient $a$  ($A_3$) and coefficient $b$ ($A_4$); and the corresponding reference solutions (solid lines) obtained by MIB method and numerical solutions (dashed lines) obtained by PI-IONet ($A_5$) and PI-DeepONet ($A_6$). The gray point-dashed lines represent the location of the interface.}
\label{fig: example1 1d multi-inputs numerical results}
\end{figure}

In the following experiments, we keep $\#\text{Sensors}=100$, while keeping other hyperparameters consistent with those used in PI-IONet. Here, $L_{\Gamma_D}(\mathbf{\theta})=0$ holds for any output function of PI-DeepONet, and $L_{\Gamma_N}$ is approximated using Eq. \eqref{eq: pi-deeponet approximate interface condition n}.
Table \ref{tab: example 1d rhs error}
records the network structures and the test errors for two neural models. As can be seen from the table, the final relative $L^2$ errors of the numerical solutions and their derivatives obtained by PI-IONet can be of the order of $10^{-3}$. Some visualizations of the input function, the reference solution, and the numerical result are shown in Fig. \ref{fig: example1 1d multi-inputs numerical results}. As can be seen from the figure, the numerical solution obtained by PI-IONet aligns more consistently with the reference solution, even in the case where the ground truth is continuous. These results suggest that IONet is able to effectively handle multiple inputs for parametric interface problems.

\begin{figure}[h]
	\centering
	\scalebox{0.6}{\includegraphics{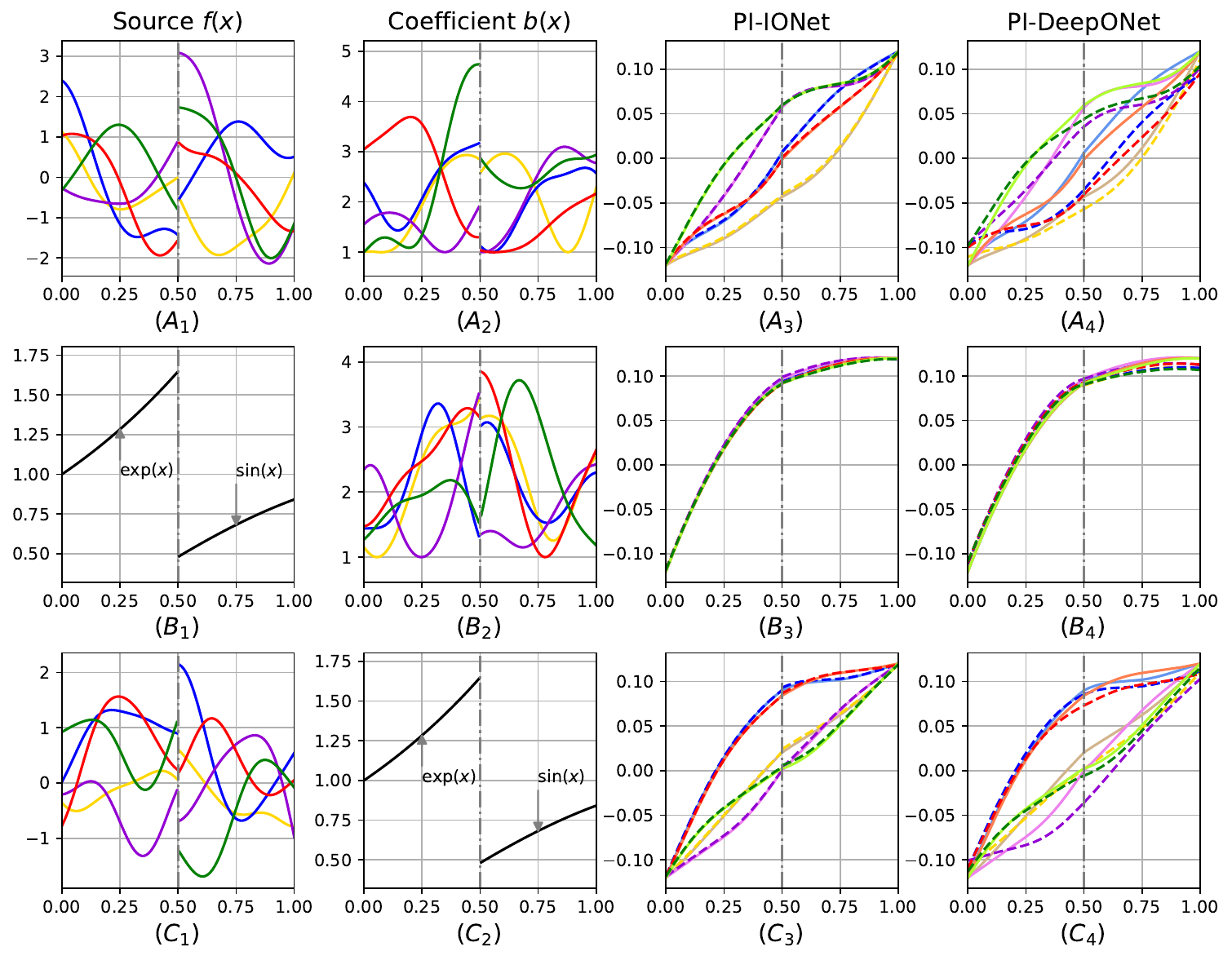}}
	\caption{
 Numerical results of PI-IONet and PI-DeepONet for out-of-distribution test input samples. Here, we keep $a_1=1.2$, $a_2=1.8$, $h_0=-0.12$ and $h_1=0.12$. The input functions in ($A_1$), ($A_2$), ($B_2$), and ($C_1$) are generated using GRF with length scale $l=0.15$. Input functions $f_1$ in ($B_1$) and $b_1$ in ($C_2$) are fixed as $\exp(x)$, while $f_2$ in ($B_1$) and $b_2$ in ($C_2$) are fixed as $\sin(x)$. The reference solutions (solid lines) and numerical solutions (dashed lines) obtained by PI-IONet and PI-DeepONet are shown in the third and fourth columns. The gray point-dashed lines represent the location of the interface.}
\label{fig: 1d rhs out-distribution prediction}
\end{figure}

It is remarked that other advantages of DeepONet also hold true for IONet, for example, the capability of providing accurate predictions for out-of-distribution test data \cite{wang2021learning, zhu2023reliable}. Fig. \ref{fig: 1d rhs out-distribution prediction} illustrates the numerical results of PI-IONet and PI-DeepONet for regenerated test input functions. In this study, we not only use GRF to generate the input functions (see the first row), but also include two certain functions, i.e., $\exp(x)$ and $\sin(x)$, as input functions (see the second and third rows). As shown in the third and fourth columns of this figure, both PI-IONet and PI-DeepONet integrate the constraints of physical laws as well as boundary conditions directly into model training, enabling the models to capture the fundamental behavior of the system. Remarkably, PI-IONet handles discontinuities in the input functions and output solutions across the interface more effectively, resulting in more accurate numerical results.
Specifically, the average relative $L^2$ errors of PI-IONet in the first to third rows are measured at $4.32\times10^{-3}$, $8.50\times10^{-3}$ and $2.01\times10^{-2}$, respectively. These results further underscore the robustness and generalization capability of PI-IONet.

\subsection{Parametric elliptic interface problems in two dimensions}
\hypertarget{example3}{\textbf{Example} 3}. To further investigate the capability of IONet, we consider a parametric interface problem \eqref{eq: interface pde} with a sharp and complicated interface $\Gamma$ which is given as
\begin{equation*}
\begin{aligned}
&x_1(\vartheta) = 0.65\text{cos}(\vartheta)^3, \\
&x_2(\vartheta) = 0.65\text{sin}(\vartheta)^3, \ 0\leq\vartheta\leq\pi.
\end{aligned}
\end{equation*}
Here, the source term $f$ is the input parameter of the target solution operator. In this example, we model the input function in the following way:
\begin{equation*}
f_i(\mathbf{x}):=f(\mathbf{x})\big|_{\Omega_i}=\frac{p^i_1}{[1+10(x_1^2+x_2^2)]^2}-\frac{p^i_2(x_1^2+x_2^2)}{[1+10(x_1^2+x_2^2)]^3},\ i=1,2,
\end{equation*}
where $(p^i_1,p^i_2)$ comes from $[50,100]\times[1550,1650]$. The computational domain is a regular square $\Omega=[-1,1]^2$ (see Fig. \ref{fig: example3 computational domain and sensor location.} for an illustration).
The coefficient $a(\mathbf{x})$ is a piece-wise constant, which is given by $a(\mathbf{x})|_{\Omega_1}=2$ and $a(\mathbf{x})|_{\Omega_2}=1$.
The interface conditions on $\Gamma$ are set as
\begin{equation*}
\begin{aligned}
&g_D(\mathbf{x}) = \frac{1}{1+10(x_1^2+x_2^2)}, \\
&g_N(\mathbf{x}) = 0,\\
\end{aligned}
\end{equation*}
and $h(\mathbf{x})$ on boundary $\partial\Omega$ is given as $$h(\mathbf{x})=\frac{2}{1+10(x_1^2+x_2^2)}.$$
One can observe that if we take $f_1(\mathbf{x})=f_2(\mathbf{x})$ with $(p^1_1,p^1_2)=(p^2_1,p^2_2)=(80,1600)$, then Eq. \eqref{eq: interface pde} has following exact solution \cite{wu2022interfaced}
\begin{equation*}
u(\mathbf{x})=\left\{
\begin{aligned}
&\frac{1}{1+10(x_1^2+x_2^2)}, \ &\text{in} \ \Omega_1, \\
&\frac{2}{1+10(x_1^2+x_2^2)}, \ &\text{in} \ \Omega_2. \\
\end{aligned}
\right.\end{equation*}
To this end, we randomly sample some input functions from the given data distribution, except for the case $(p^1_1,p^1_2)=(p^2_1,p^2_2)=(80,1600)$, which is reserved for testing purposes. The sensors for the input functions in the whole domain are equidistant grid points in the square $[-1, 1]^2$. Take advantage of being mesh-free, IONet can easily handle problems with irregular interfaces.

\begin{figure}[ht]
	\centering
	\scalebox{0.6}{\includegraphics{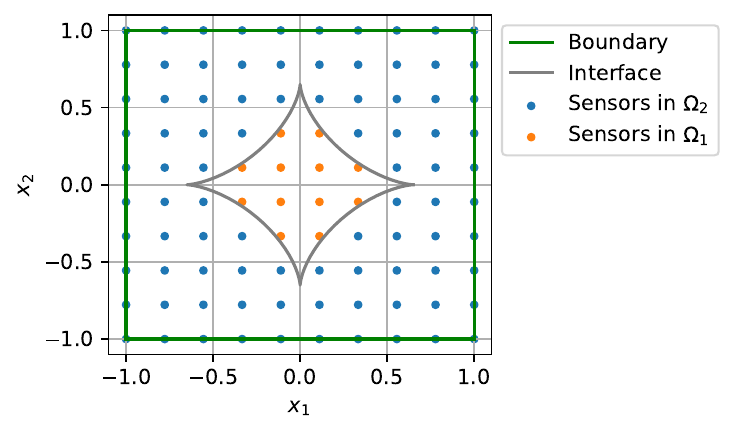}}
	\caption{Computational domain and sensor locations in Example \protect\hyperlink{example3}{3}. Here, the number of sensors is 100.}
\label{fig: example3 computational domain and sensor location.}
\end{figure}

\begin{figure}[ht]
	\centering
	\scalebox{0.52}{\includegraphics{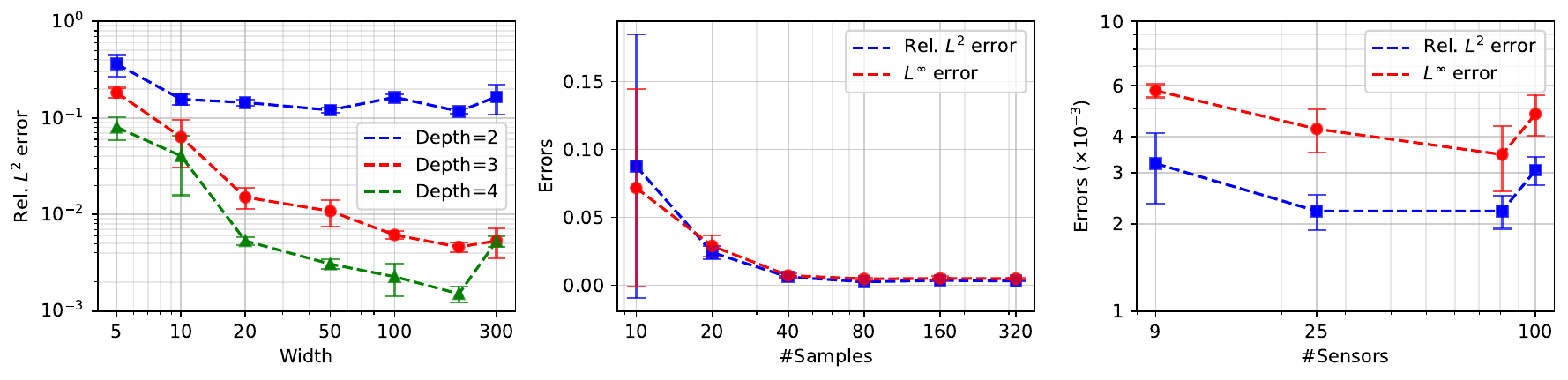}}
	\caption{The mean and one standard deviation of test errors of PI-IONet in Example \protect\hyperlink{example3}{3}. (Left)  Training PI-IONet using different depth and width of the network architecture, where $\#\text{Sensors}=100$ and $\#\text{Samples}=320$. (Middle) Training PI-IONet using different number of training samples, where $\text{width}=50$, $\text{depth}=4$ and $\#\text{Sensors}=100$. (Right) Training PI-IONet using different number of sensors, where $\text{width}=50$, $\text{depth}=4$ and $\#\text{Samples}=320$. These errors represent the average of 3 different runs corresponding to different set of training input functions and network initialization.}
\label{fig: example 3 error plot}
\end{figure}

\begin{figure}[htbp]
	\centering
	\scalebox{0.50}{\includegraphics{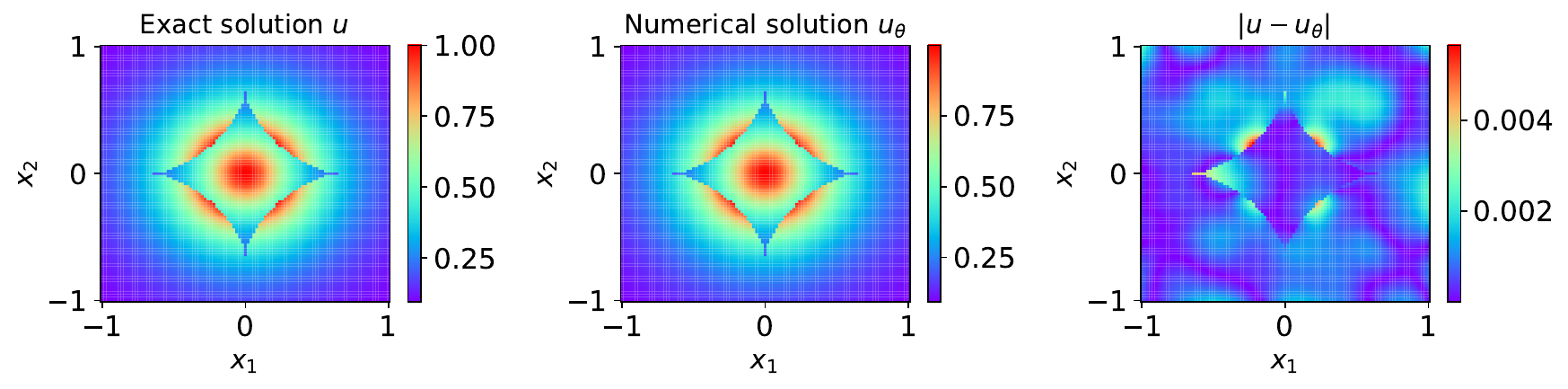}}
	\caption{The profile of the exact solution, the numerical solution obtained by PI-IONet, and the corresponding point-wise error are showcased from left to right. Here, $\text{width}=50$, $\text{depth}=4$, $\#\text{Sensors}=100$, and $\# \text{Samples}=320$. }
\label{fig: example3 solutions and error}
\end{figure}

In this study, we discuss the effects of the depth and width of the network, as well as the number of sensors and training samples,  on the performance of the PI-IONet. We train PI-IONet by minimizing the physics-informed loss function \eqref{eq: empirical loss function} with $\lambda_1=\lambda_2=1$, $\lambda_3=10$ and $\lambda_4=100$ for $4\times10^4$ iterations of optimization.
Fig. \ref{fig: example 3 error plot} illustrates the average error of three different runs of PI-IONet. Specifically, the left panel shows the relative $L^2$ error, i.e
$$
\frac{\|u-u_\theta\|_2}{\|u\|_2}=\sqrt{\frac{\sum_{i=1}^N\left(u(\mathbf{x}_i)-u_\theta(\mathbf{x}_i)\right)^2}{\sum_{i=1}^Nu(\mathbf{x}_i)^2}},
$$
between the exact solution $u$ and the PI-IONet solution $u_\theta$ measured at $N= 101\times 101$ test points over the whole domain, with varying depths (ranging from 2 to 4) and widths (ranging from 5 to 300) of the network architecture. We observe that increasing the expressiveness of the network leads to improved solution accuracy, eventually reaching a plain where the error reduction levels off. A similar trend is also observed on the middle panel as the number of training input samples (denoted as $\# \text{Samples}$) increases. These might be caused by optimization errors. Additionally, as the input function $f(\mathbf{x})$ in this case is controlled by four parameters (i.e., $p_1^i$ and $p_2^i$ with $i=1,2$), a small number of sensors could be enough to capture all necessary frequency information of the input functions. As depicted on the right panel of this figure, although the accuracy of PI-IONet does not significantly improve with an increase in $\#\text{Sensors}$, the average $L^\infty$ error and relative $L^2$ error of PI-IONet with different $\#\text{Sensors}$ can both reach the order of $10^{-3}$. Moreover, it can be seen from Fig. \ref{fig: example3 solutions and error}, the numerical solution is in excellent agreement with the exact solution, where the relative $L^2$ error is measured at $2.60\times10^{-3}$. These results demonstrate the consistent and reliable performance of IONet in generating accurate numerical results, even in scenarios where the interface is irregular.

\hypertarget{example4}{\textbf{Example} 4}. This example aims to investigate the performance of IONet in handling a two-dimensional parametric interface problem with variable boundary conditions and coefficients.
Computational domain is defined as $\Omega:= [0,1]^2$, and the interface is defined as $\Gamma:=\{\mathbf{x}:=(x_1,x_2) \ | \ x_2=0.5, \mathbf{x}\in \Omega\}$. Without losing generality, we defined $\Omega_1:=\{\mathbf{x} \ | \ x_2>0.5, \mathbf{x}\in \Omega\}$ and $\Omega_2:=\{\mathbf{x} \ | \ x_2<0.5, \mathbf{x}\in \Omega\}$ (see Fig. \ref{fig: example2 domain} for an illustration). Specifically, the interface problem takes the following specific form:
\begin{equation}
\begin{aligned}
 -\nabla\cdot(a\nabla u(\mathbf{x})) &=  0,\quad  &&\mathbf{x} \in\Omega,\\
u(\mathbf{x})&=h(\mathbf{x}),\quad &&\mathbf{x} \in \partial\Omega,
\label{eq: 2D hx}
\end{aligned}
\end{equation}
with interface conditions $g_D=0$ and $g_N=0$. Our goal here is to learn the solution operator that maps the coefficient $a(x)$ and the boundary condition $h(\mathbf{x})$ to the solution $u(\mathbf{x})$ to Eq. \eqref{eq: 2D hx}, i.e.,
\begin{equation*}
 \mathcal{G}: \left(h(x) ,a(x)\right)\rightarrow u(x).
\end{equation*}
\begin{figure}[ht]
	\centering
	\scalebox{0.35}{\includegraphics{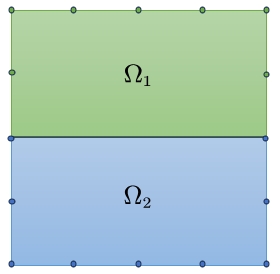}}
	\caption{Computational domain for Example \protect\hyperlink{example4}{4}. The black solid line indicates the interface location.}
\label{fig: example2 domain}
\end{figure}

In this example, we randomly sample $3,200$ and $100$ pairs of input functions $(h, a)$ for training and testing, respectively. Here, the boundary conditions $h_i(\mathbf{x}):=h(\mathbf{x})|_{
\partial\Omega\cap\Omega_i}$ with $i=1, 2$ are independently generated using GRF according to $h_i\sim \mu|_{\partial\Omega\cap\Omega_i}$, where $\mu\sim\mathcal{N}(0,10^{3}(-\Delta+100I)^{-4})$ with zero Neumann boundary conditions on the Laplacian\footnote{One common approach is to use a random number generator to sample from a normal distribution with zero mean and unit variance and then apply a spectral representation to generate the desired spatial correlation structure, see \cite{li2021fourier} for more details.}, while the coefficient $a(x)$ is modeled as a piece-wise constant function, where $a_1:=a|_{\Omega_1}$ and $a_2:=a_2|_{\Omega_2}$ are uniformly sampled from the intervals $[0.5, 1]$ and $[2,3]$, respectively. For each pair of input, the reference solution is obtained by the $\mathbb{P}_1$ Lagrangian finite element method\footnote{The implementation is based on the Fenics platform \cite{fenics2010anders}.} on a uniform mesh of 1025 by 1025, while the test error of the numerical solution is measured at its 65 by 65 submesh.

In this study, to accommodate two input functions, the IONet architecture consists of four branch nets and two trunk nets, while the DeepONet architecture consists of one branch net and one trunk net. Specifically, the inputs for branch nets in IONet are $[h(y^1_1),\cdots, h(y^1_{n_1})]$, $[h(y^2_1),\cdots, h(y^2_{n_2})]$, $[a_1]$ and $[a_2]$, while for DeepONet it is
$[h(y_1),\cdots, h(y_{n}), a_1, a_2]$, where $\{y_i\}_{i=1}^{n}=\{y_i^1\}_{i=1}^{n_1}\cup\{y^2_i\}_{i=1}^{n_2}$ is the set of sensors over the whole domain.

\begin{figure}[ht]
	\centering
	\scalebox{0.5}{\includegraphics{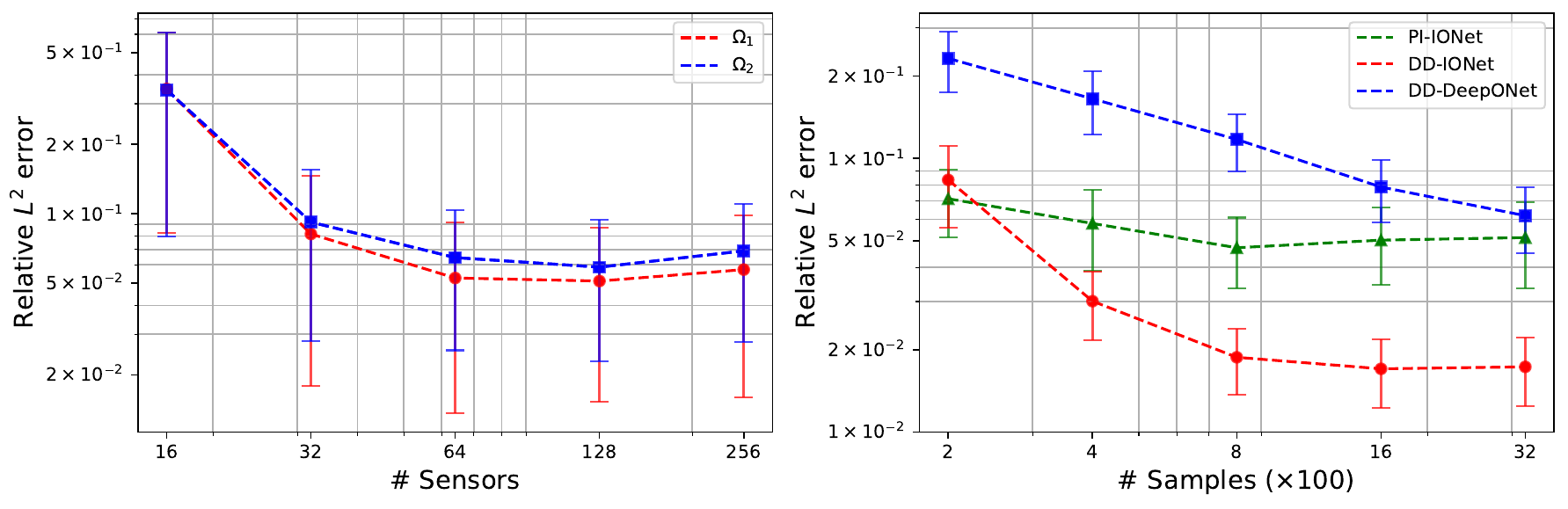}}
	\caption{Numerical results of Example \protect\hyperlink{example4}{4}. Left: the relative $L^2$ error of PI-IONet with respect to $\# \text{Sensors}$, where $\# \text{Samples}=1600$. Right: The relative $L^2$ error of PI-IONet, DD-IONet and DD-DeepONet with different number of training samples, where $\# \text{Sensors}=128$.}
\label{fig: example2 results}
\end{figure}

\begin{table}[ht]
\centering
\fontsize{10}{8}\selectfont
\begin{threeparttable}
\caption{Test errors and training costs for  PI-IONet,  DD-IONet and DD-DeepONet. The error corresponds to the relative $L^2$ error, recorded in the form of mean $\pm$ standard deviation based on all test input functions in Example  \protect\hyperlink{example4} {4}. Here,  $\# \text{Sensors}=128$ and $\# \text{Samples}=1600$.}
\begin{tabular}{c|cccccc}
\toprule
 Model &Activation& Depth & Width  & \#Parameters&  $L^2(\mathcal{G}_{\theta}, \mathcal{G})$ & Training time (hours) \\
 \midrule
PI-IONet & Tanh  &5 & 120 & 365K & 5.03e-2$\pm$1.58e-2 & 1.26 \\
DD-IONet & ReLU  & 5 & 120 & 365K &
1.70e-2$\pm$4.79e-3
&  0.47 \\
DD-DeepONet& ReLU &5& 206  & 368K& 7.56e-2$\pm$1.99e-2  & 0.20\\
\bottomrule
\end{tabular}
\label{tab: example4 compare}
\end{threeparttable}
\end{table}
\begin{figure}[htbp]
	\centering
	\scalebox{0.5}{\includegraphics{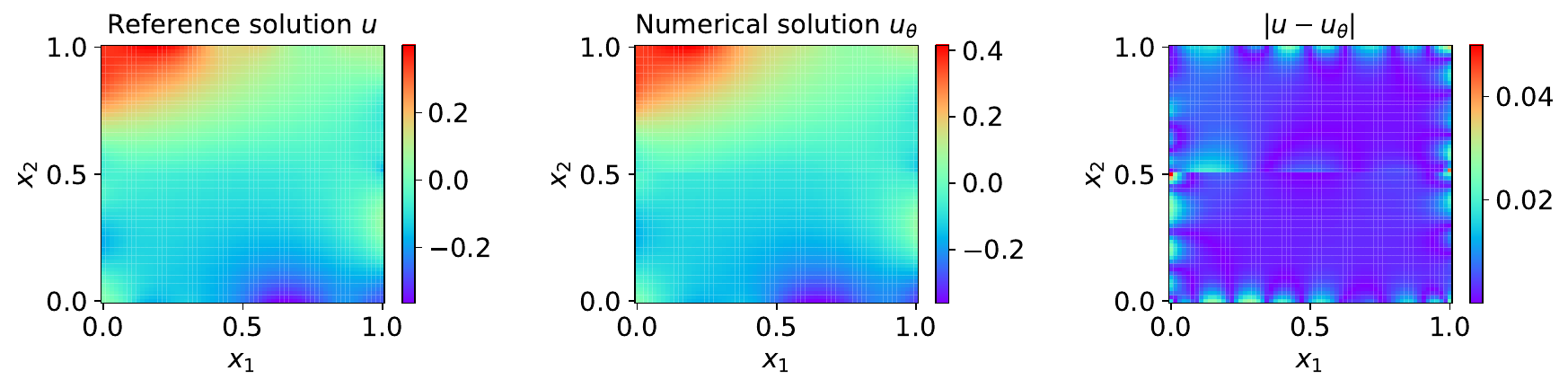}}
	\caption{The profile of the reference solution, the numerical solution obtained by PI-IONet, and the corresponding point-wise error in the whole domain for a representative example in the test dataset.}
\label{fig: 2d boundary FEM vs IONet}
\end{figure}

In the following experiments, the numerical results are recorded after $1\times10^{5}$ optimization iterations. The detailed network sizes are provided in Table \ref{tab: example4 compare}. Fig. \ref{fig: example2 results} illustrates the accuracy of PI-IONet, DD-IONet and DD-DeepONet with respect to the number of sensors or training samples.
Specifically, the left panel of this figure depicts the variation of the relative $L^2$ error for PI-IONet, which is trained by minimizing the physics-informed loss function \ref{eq: empirical loss function} with $\lambda_1=\lambda_2=1$, $\lambda_3=10$ and $\lambda_4=100$. It can be seen that the relative $L^2$ error measured at two subdomains decreases rapidly when $\# \text{Sensors}$ is less than 64; however, it tends to level off as $\# \text{Sensors}$ is further augmented due to other factors such as optimization errors and generalization errors. In addition, the errors in the two subdomains exhibit close proximity to one another, indicating the effectiveness of the proposed method in balancing errors across the subdomains.
In order to ascertain the effect of $\# \text{Samples}$ on the performance of PI-IONet, DD-IONet and DD-DeepONet, we maintain $\# \text{Sensors}= 128$. As illustrated in the right panel of Fig. \ref{fig: example2 results}, the relative $L^2$ errors tend to decrease with an increase in the number of samples. This observation aligns with the findings of Example \protect\hyperlink{example3} {3}, which concerns a parameterized interface problem with a single input source. It is noteworthy that the accuracy of PI-IONet is less sensitive to $\# \text{Samples}$ compared to DD-IONet and DD-DeepONet. For instance, when the number of training samples is limited to $\#\text{Samples}=200$, PI-IONet achieves the lowest relative $L^2$ error among the three models without any paired input-output measurements. Table \ref{tab: example4 compare} shows the relative $L^2$ error and the training cost of PI-IONet, DD-IONet and DD-DeepONet when $\# \text{Sensors}= 128$ and  $\# \text{Samples}= 1600$. It is observed that with a sufficiently large number of samples, the relative $L^2$ error of all three models can reach the order of $10^{-2}$, and DD-IONet outperforms DD-DeepONet. An illustration of a numerical solution obtained using PI-IONet is presented in Fig. \ref{fig: 2d boundary FEM vs IONet}, with the relative $L^2$ error measured at $4.13\times10^{-2}$. The numerical solution derived from PI-IONet exhibits consistency with the reference solution.

\subsection{Parametric elliptic interface problems in three dimensions}
\hypertarget{example5}{\textbf{Example} 5}.
To illustrate the capability of the proposed method for solving nonlinear interface problems, we consider the Poisson-Boltzmann equation (PBE), a prevalent implicit continuum model utilized in the estimation of biomolecular electrostatic potentials $\Phi(\mathbf{x})$. Similar equations occur in various applications, including electrochemistry and semiconductor physics. The molecule in the PBE is represented by a series of $N_m$ charges $q_i$ at positions $\mathbf{c}_i$, where $q_i=z_ie_c$, $z_i\in \mathbb{R}$, $i=1,\cdots , N_m$. Specifically, we choose a real molecule (PDBID: ADP) with $N_m$ = 39 atoms as an example. Without loss of generality, the molecule is translated from the average coordinate center of all atoms to the center of $\Omega=[-10,10]^3$. Then, in the special case of $1:1$ electrolyte, the PBE can be formulated for dimensionless potential $u(\mathbf{x}) = e_ck_B^{-1}T^{-1}\Phi(\mathbf{x})$ as follows:
\begin{equation}
\begin{aligned}
 - \nabla\cdot (\epsilon(\mathbf{x})\nabla u(\mathbf{x}))+\Bar{\kappa}^2(\mathbf{x})\sinh(u(\mathbf{x})) &=\alpha\sum_{i=1}^{N_m}z_i\delta(\mathbf{\mathbf{x}-\mathbf{c}_i}),  \; &&\mathbf{x}\in \Omega,\\
 \llbracket u(\mathbf{x})\rrbracket &= 0,\; &&\mathbf{x}\in\Gamma, \\
 \llbracket \epsilon(\mathbf{x})\frac{\partial u(\mathbf{x})}{\partial \mathbf{n}}\rrbracket  &= 0,\; &&\mathbf{x}\in\Gamma, \\
u(\mathbf{x})&=\frac{\alpha}{4\pi\epsilon(\mathbf{x})}\sum_{i=1}^{N_m}z_i\frac{e^{-\kappa\|\mathbf{x}-\mathbf{c}_i\|}}{\|\mathbf{x}-\mathbf{c}_i\|},\; &&\mathbf{x}\in\partial\Omega,
\end{aligned}
\label{eq: NPBE}
\end{equation}
where $\delta(\cdot)$ is the Dirac delta function, the permittivity $\epsilon(\mathbf{x})$ takes the values of $\epsilon_m\epsilon_0$ and $\epsilon_s\epsilon_0$ in the molecular region $\Omega_1$ and the solution region $\Omega_2$, respectively. The modified Debye-H$\ddot{\text{u}}$ckel takes the values $\Bar{\kappa}=0$ in $\Omega_1$ and $\Bar{\kappa}=\sqrt{\epsilon_m\epsilon_0}\kappa$ in $\Omega_2$, and constant $\alpha = \frac{e_c^2}{k_BT}$. Here, constants $\epsilon_0, e_c, \beta, \kappa$ and $T$ represent the vacuum dielectric constant,  fundamental charge, Boltzmann's constant, Debye-H$\ddot{\text{u}}$ckel constant and  absolute temperature, respectively. Our goal is to learn an operator $\mathcal{G}$ mapping from the permittivity $\epsilon(\mathbf{x})$ to the solution $u(\mathbf{x})$ to PBE.
Note that $\epsilon$ has a piece-wise constant nature, allowing us to directly utilize the function values as inputs for IONet without requiring sensor-based discretization.

To numerically solve PBE \eqref{eq: NPBE}, we use a solution decomposition scheme to overcome the singular difficulty caused by the Dirac delta distributions. Similar to our former work \cite{wu2022interfaced}, $u$ is decomposed as $u(\mathbf{x}) = \Bar{G}(\mathbf{x}) + \Bar{u}(\mathbf{x})$, where
\begin{equation*}
  \Bar{G}(\mathbf{x})=\frac{\alpha}{4\pi \epsilon_m\epsilon_0}\sum_{i=1}^{N_m}\frac{z_i}{\|\mathbf{x}-\mathbf{c}_i\|}
,\  \nabla  \Bar{G}(\mathbf{x})= -\frac{\alpha}{4\pi \epsilon_m\epsilon_0}\sum_{i=1}^{N_m}z_i\frac{\mathbf{x}-\mathbf{c}_i}{\|\mathbf{x}-\mathbf{c}_i\|^3}
\end{equation*}
are restricted to $\Omega_1$, and  $\Bar{u}(\mathbf{x})$ satisfies the following PDE:
\begin{equation*}
\begin{aligned}
-\nabla\cdot (\epsilon_m\epsilon_0\nabla \Bar{u}(\mathbf{x})) &=0, \ &&\mathbf{x} \in \Omega_1, \\
-\nabla\cdot(\epsilon_s\epsilon_0\nabla \Bar{u}(\mathbf{x}) ) + \Bar{\kappa}^2 \sinh(\Bar{u}(\mathbf{x}))  &=0, \ &&\mathbf{x} \in \Omega_2, \\
\llbracket \Bar{u}(\mathbf{x})\rrbracket  &= \Bar{G}(\mathbf{x}),\ &&\mathbf{x}\in\Gamma, \\
\llbracket \epsilon(\mathbf{x})\frac{\partial \Bar{u}(\mathbf{x})}{\partial \mathbf{n}}\rrbracket  &= \epsilon_m\epsilon_0\frac{\partial \Bar{G}(\mathbf{x})}{\partial \mathbf{n}},\ &&\mathbf{x}\in\Gamma, \\
\Bar{u}(\mathbf{x})&=\frac{\alpha}{4\pi\epsilon_s\epsilon_0}\sum_{i=1}^{N_m}z_i\frac{e^{-\kappa\|\mathbf{x}-\mathbf{c}_i\|}}{\|\mathbf{x}-\mathbf{c}_i\|}, \ &&\mathbf{x} \in\partial\Omega.
\end{aligned}
\end{equation*}

The training dataset comprises $1,000$ input functions (constants) uniformly and randomly sampled from the space $(\epsilon_m, \epsilon_s)\in[1,2]\times[80,100]$, while the test dataset is composed of equidistant grid points arranged in a $6 \times 6$ grid within this space.  For each test sample, we solved PBE \eqref{eq: NPBE} using piece-wise linear FEM \cite{ji2018finite} on an interface-fitted mesh to generate the reference solution. Specifically, the grid points of the FEM mesh consist of $1407$, $3403$, $2743$ and $2402$ points in $\Omega_1$, $\Omega_2$, $\Gamma$ and $\partial\Omega$, respectively. During the training phase, for each input function, the training points used to evaluate the loss function consist of a collection of one-tenth randomly sampled grid points, rather than a set of randomly sampled scatter points within the domain. This approach ensures consistency in the interface across different methods. Note that the unit outward normal vector for each point on the interface $\Gamma$ is approximated by taking the average of the outward normal directions of all elements that contain the corresponding point.

\begin{table}[htbp]
\centering
\fontsize{10}{8}\selectfont
\begin{threeparttable}
\caption{Test errors and training costs for PI-IONet, DD-IONet, and DD-DeepONet. The error corresponds to the relative $L^2$ error, recorded in the form of mean $\pm$ standard deviation based on all test input functions in Example \protect\hyperlink{example5} {5}.
}
\begin{tabular}{ccccccc}
\toprule
Model & Activation & Depth&  Width
& \#Parameters  &  $L^2(\mathcal{G}_{\theta}, \mathcal{G})$ & Training time (hours)\\
 \midrule
PI-IONet& Tanh & 5 & 150 & 364K & 1.24e-2$\pm$8.31e-5 &  0.51 \\
DD-IONet & ReLU & 5 & 150 & 364K &  4.33e-3$\pm$1.61e-3 &0.17 \\
DD-DeepONet& ReLU & 5  & 215 & 373K  & 1.54e-2$\pm$3.80e-4   &  0.10
 \\
\bottomrule
\label{tab: 3d pb}
\end{tabular}
\end{threeparttable}
\end{table}

\begin{figure}[htbp]
\centering
\subfigure[Surface potential obtained by  FEM.]{
\begin{minipage}{7cm}
\centering
\includegraphics[scale=0.35]{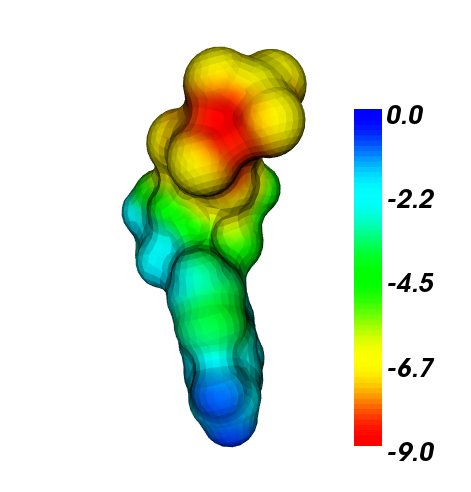}
\end{minipage}
}
\subfigure[Surface potential obtained by PI-IONet.]{
\begin{minipage}{7cm}
\centering
\includegraphics[scale=0.35]{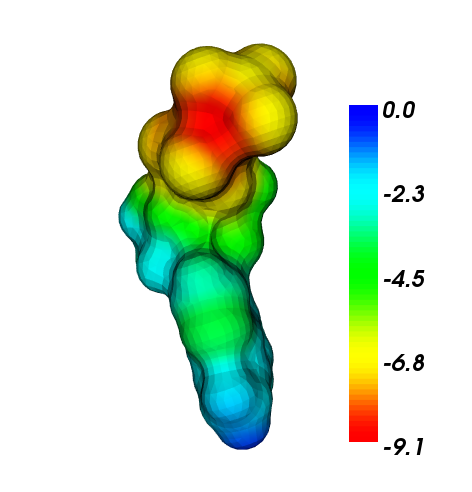}
\end{minipage}
}
\caption{Surface potentials for protein ADP. Here, $\epsilon_m=2$ and $\epsilon_s =80$.}
\label{fig: adp plot}
\end{figure}

In this example, we approximate the solution operator of PBE \eqref{eq: NPBE} using PI-IONet, DD-IONet and DD-DeepONet. Herein, the weights in the loss function \eqref{eq: empirical loss function} of PI-IONet are set as $\lambda_1=\lambda_2=\lambda_3=1$ and $\lambda_4=100$. Table \ref{tab: 3d pb} records test errors measured at 4810 grid points in $\Omega_1$ and $\Omega_2$ and training costs of these three models after $5 \times 10^4$ parameter updates. It can be observed that DD-IONet achieves the lowest relative $L^2$ error. Additionally, despite lacking any paired input-output measurements, except for the boundary conditions, the error accuracy of PI-IONet is comparable to those of DD-DeepONet, albeit with slightly higher training costs. A visual comparison of the reference and predicted surface potentials of the protein ADP is shown in Fig. \ref{fig: adp plot}. These findings further emphasize the capability of IONet handle parametric interface problems within irregular interface. While our current work demonstrates the effectiveness of IONet in solving PBE with a real small-molecule ADP, further research is needed to investigate the computational efficiency of PI-IONet and other neural network-based methods in solving large-scale computational problems in biophysics, such as solving PBE with real macromolecules. We will postpone this part of the work to future research.

In this example, we approximate the solution operator of PBE \eqref{eq: NPBE} using PI-IONet, DD-IONet and DD-DeepONet. Herein, the weights in the loss function \eqref{eq: empirical loss function} of PI-IONet are set as $\lambda_1=\lambda_2=\lambda_3=1$ and $\lambda_4=100$, and the network hyperparameters are presented in Table \ref{tab: 3d pb}. In all cases, the neural networks are trained after $5 \times 10^4$ parameter updates. The test errors and training costs of these three models are shown in Table \ref{tab: 3d pb}.
It can be observed that DD-IONet achieves the lowest relative $L^2$ error. Additionally, despite lacking any paired input-output measurements, except for the boundary conditions, the error accuracy of PI-IONet is comparable to those of DD-DeepONet, albeit with slightly higher training costs. In addition, a visual comparison of the reference and predicted surface potentials of the protein ADP is shown in Fig. \ref{fig: adp plot}. These findings further emphasize the capability of IONet to effectively handle parametric interface problems within irregular interface. While our current work demonstrates the effectiveness of IONet in solving PBE with a real small-molecule ADP, further research is needed to investigate the computational efficiency of PI-IONet and other neural network-based methods in solving large-scale computational problems in biophysics, such as solving PBE with real macromolecules. We will postpone this part of the work to future research.

\subsection{Parametric elliptic interface problems in six dimensions\label{Ex: example5 6d}}
\hypertarget{example6}{\textbf{Example} 6}. Our final example aims to highlight the ability of the proposed framework to handle high-dimensional parametric interface problems. Here, we consider Eq. \eqref{eq: interface pde} defined on a 6-dimension sphere of radius 0.6 domain $\Omega$ enclosing another smaller 6-dimension sphere of radius 0.5 as the interior domain $\Omega_1$.  Our goal is to learn the solution operator mapping from the source term $f$ to the latent solution of Eq. \eqref{eq: interface pde}, i.e., $\mathcal{G}: f(\mathbf{x})\rightarrow u(\mathbf{x})$.  In this case, $f$ has the following specific forms
\begin{equation*}
    f(\mathbf{x}) =\left\{
\begin{aligned}
&-p_1\prod_{i=1}^6{\exp(x_i)},\ &&\mathbf{x} \in \Omega_1, \\
&-p_2\prod_{i=1}^6{\sin(x_i)},\ &&\mathbf{x}\in \Omega_2,
\end{aligned}\right.
\end{equation*}
where $(p_1,p_2)$ randomly sample from $[1,10]\times[-10^{-2},-10^{-3}]$. For the problem setup, the coefficient
\begin{equation*}
    a(\mathbf{x}) =\left\{
\begin{aligned}
&1,\ &&\mathbf{x} \in \Omega_1, \\
&10^{-3},\ &&\mathbf{x}\in \Omega_2,
\end{aligned}\right.
\end{equation*}
has a large contrast ($a_1/a_2=10^3$), the boundary condition is given as
$h(\mathbf{x})=\prod_{i=1}^6{\sin(x_i)}$, the interface conditions are chosen as
$g_D(\mathbf{x}) = \prod_{i=1}^6{\sin(x_i)}-\prod_{i=1}^6{\exp(x_i)},
$ and
\begin{equation*}
    g_N(\mathbf{x}) = \frac{5}{3}\left(10^{-3}\sum_{i=1}^6\left(x_i\cos(x_i){\prod_{j=1, j\neq i}^6{\sin(x_j)}}\right)-\left(\sum_{i=1}^6{x_i}\right)\prod_{j=1}^6{\exp(x_j)}\right).
\end{equation*}
Note that, when  we take $f$ with $p_1=6$ and $p_2=-6\times10^{-3}$, Eq. \eqref{eq: interface pde} exists an exact solution \cite{hu2022discontinuity}
\begin{equation*}
    u(\mathbf{x}) =\left\{
\begin{aligned}
&\prod_{i=1}^6{\exp(x_i)},\ &&\mathbf{x} \in \Omega_1, \\
&\prod_{i=1}^6{\sin(x_i)},\ &&\mathbf{x}\in \Omega_2.
\end{aligned}\right.
\end{equation*}

\begin{table}[htbp]
\centering
\fontsize{10}{8}\selectfont
\begin{threeparttable}
\caption{$L^\infty$ and relative $L^2$ errors for PI-IONet with different widths and depths. These errors are obtained by averaging the results from three independent experiments, each involving different network random initialization and randomly generated training dataset.}
\begin{tabular}{cccccc}
\toprule
Depth& Width & \#Parameters &$\norm{u-u_{\theta}}_{\infty}$ & $\frac{\norm{u-u_{\theta}}_{2}}{\norm{u}_{2}}$ & Training time (hours)\\
 \midrule
3& 30 & 9k
& 3.35e-2$\pm$1.95e-2&
1.38e-2$\pm$6.18e-4 & 0.42\\
4& 40  & 21k
&  1.36e-2$\pm$6.06e-3&
8.04e-3$\pm$4.01e-3 & 0.55\\
5& 50  & 43k
&7.67e-3$\pm$1.46e-3&
1.56e-3$\pm$6.84e-4 & 0.66\\
\bottomrule
\label{tab: 6d case}
\end{tabular}
\end{threeparttable}
\end{table}

In this study, we train PI-IONet $\mathcal{G}_{\mathbf{\theta}}$ with different scales of architecture to approximate the solution operator and then test the trained model in the case of input $f$ with $p_1=6$ and $p_2=-6\times10^{-3}$. In all cases, we train PI-IONet by minimizing the loss function \eqref{eq: empirical loss function} with $\lambda_1=\lambda_2=\lambda_3=1$ and $\lambda_4=100$ for $4\times10^4$ iterations of parameter updates, utilizing 100 randomly sampled input functions. The sensors for discretizing the input functions comprise a set of randomly sampled points in the computational domain, as opposed to lattice points in a higher dimensional space. Specifically, we set the number of sensors as 40.

Table \ref{tab: 6d case} illustrates the test errors measured at $10,000$ test data points over the whole domain and the computational costs for training PI-IONet. It is shown that the accuracy of the numerical solution improves while the training cost grows as the number of trainable parameters increases. The final $L^\infty$ error and relative $L^2$ error are about $0.7\%$ and $0.2\%$, respectively, with low deviations. These findings suggest that PI-IONet has the potential to achieve high performance in dealing with high-dimensional output solutions of parametric elliptic interface problems, even in scenarios involving high-contrast coefficients.

\section{Conclusions\label{conclusions}}
In this work, we have investigated deep neural network-based operator learning methods and proposed the interfaced operator network (IONet) to tackle parametric elliptic interface problems. The main contribution is that we first combine the domain-decomposed method with the operator learning methods and employ multiple branch nets and trunk nets to explicitly handle the discontinuities across the interface in the input and output functions. In addition, we introduce tailored physics-informed loss designed to constrain the physical consistency of the proposed model. This strategy reduces the requirement for training data and empowers the IONet to remain effective even in the absence of paired input-output training data. We also provide theory and numerical experiments to demonstrate that the proposed IONet is effective and reliable for approximating the solution operator of parametric interface problems. In our simulations, we systematically studied the effects of different factors on the accuracy of IONet and existing state-of-the-art operator networks. The results show that the proposed IONet is more robust and accurate in dealing with many kinds of parametric interface problems due to its discontinuity-preserving architecture.

Despite the preliminary success, there are still many issues that need further investigation. As an advantage of neural operators is their fast predictions, an important aspect of interest regarding IONet is the systematic comparison of its computational cost with other numerical methods for solving interface problems. In addition, one limitation is the absence of treating geometry configuration as an input function in our current work. Integrating the geometry configuration into IONet could potentially enhance its capabilities and further broaden its range of applications. Furthermore, we have not yet obtained the convergence rate for IONet, which would provide valuable insights into both the accuracy and stability of the model. Inspired by the works on the error estimates for DeepONets \cite{lanthaler2022error} and generalization performance analysis of deep learning for PDEs \cite{lu2022machine, shin2020on,luo2020two,jiao2021convergence}, including interface problems \cite{wu2023Convergence}, it is interesting to improve the convergence properties and the error estimation of IONet for solving parametric elliptic interface problems. Moreover, IONet can be viewed as a specific DeepONet preserving discontinuity, and we are also interested in exploring the feasibility of integrating recent advancements and extensions from DeepONet  (e.g., DeepONet with proper orthogonal decomposition \cite{lu2022comprehensive}, DeepONet based on latent representations and autoencoders \cite{kontolati2023learning}, DeepONet using Laplace transform \cite{cao2023lno} ) into IONet.

\section*{Acknowledgments}
This research was funded by the Strategic Priority Research Program of Chinese Academy of Sciences (Grant No. XDB0500000) and the National Natural Science Foundation of China (Grant Nos. 12371413, 22073110 and 12171466).

\appendix
\section{Proof of Theorem \ref{thm:approximation of ino}}\label{app:proof of approximation}
Here we provide the proof of Theorem \ref{thm:approximation of ino}, which relies on the universal approximation theorem of FNNs (see, e.g., \cite{de2021approximation}) and the tensor product decomposition of operators \cite{jin2022mionet}.
\begin{proof}
Denote $T_{\ind} = \{a|_{\overline{\Omega}_{\ind}} |\ a\in T\}\subset C(\overline{\Omega}_{\ind})$ where ${\ind}=1,\cdots, \Ind$. Consider an operator $\mathcal{\tilde{G}}$ mapping from $ T_1\times \cdots \times T_{\Ind}$ to $X(\Omega)$ as
\begin{equation}\label{es:constructG}
\mathcal{\tilde{G}}(a_1,\cdots,  a_{\Ind}) := \mathcal{G}(a),\quad \text{where}\ a(x) = a_i(x) ,\quad \text{if }x\in \Omega_i.
\end{equation}
Observing the fact that the function values evaluated at given points are equivalent to the piece-wise linear functions of Faber-Schauder basis, and by corollary 2.6 in \cite{jin2022mionet}, for any $\varepsilon>0$, there exist positive integers $m_{\ind}$, $K$ and continuous functions $g_{i,k} \in C(\mathbb{R}^{m_i})$ and $u_k \in X(\Omega)$ and $\y_{\ind}^1,\cdots, \y^{\ind}_{m_{\ind}} \in \Omega_{\ind}$, where $k=1, \cdots, K$, $\ind=1, \cdots, \Ind$, such that
\begin{equation}\label{es:tildeG}
\sup_{a_1\in T_1, \ a_2\in T_2} \norm{\mathcal{\tilde{G}}(a_1, \cdots, a_{\Ind})(\cdot) - \sum_{k=1}^K \prod_{\ind=1}^{\Ind} g_{\ind,k}(a_{\ind}(\y_{\ind}^1),\cdots, a_{\ind}(\y^{\ind}_{m_{\ind}})) \cdot u_k(\cdot)}_{X(\Omega)} \leq \frac{\varepsilon}{2}.
\end{equation}
Since $T_{\ind}$ is a collection of continuous functions, and $\Omega$ is bounded, we have that $A_{\ind} := \{ (a_{\ind}(\y_{\ind}^1),\cdots, a_{\ind}(\y_{\ind}^{m_{\ind}})) | a_{\ind} \in T_{\ind}\} \in \mathbb{R}^{m_{\ind}}$ is bounded for ${\ind}=1,\cdots, \Ind$, and there exists a cuboid containing $A_{\ind}$. Consequently, by the approximation theorems of tanh FNN \cite{de2021approximation}, for any $\delta>0$, there exist tanh FNNs $\mathcal{N}_{b_{\ind}}: \mathbb{R}^{m_{\ind}}\rightarrow \mathbb{R}^K$, $\mathcal{N}^{{\ind}}_{t}: \Omega_i\rightarrow \mathbb{R}^K$, such that
\begin{equation*}
\norm{[\mathcal{N}_{b_{\ind}}]_k - g_{{\ind},k}}_{C^{\infty}(A_{\ind})} \leq \delta,
\quad
\norm{[\mathcal{N}_{t}^{\ind}]_k - u_k|_{\Omega_{\ind}}}_{H^2(\Omega_{\ind})}\leq \delta,\quad {\ind}=1,\cdots, \Ind,\quad k=1,\cdots,K,
\end{equation*}
where $[\mathcal{N}]_k$ denotes the $k$-th component of $\mathcal{N}$. Denote
\begin{equation*}
M_k = \max \{\norm{g_{{1},k}}_{C^{\infty}(A_{1})},\cdots,\ \norm{g_{{\Ind},k}}_{C^{\infty}(A_{\Ind})}, \norm{u_k}_{X(\Omega)}\},\  k=1,\cdots,K.
\end{equation*}
Subsequently, we can choose a sufficiently small $\delta$ such that
\begin{equation}\label{es:nn}
\begin{aligned}
&\Big\| \sum_{k=1}^K \prod_{\ind=1}^{\Ind}[\mathcal{N}_{b_{\ind}}]_k(a_{\ind}(\y_{\ind}^1),\cdots, a_{\ind}(\y^{\ind}_{m_{\ind}})) \cdot [\mathcal{N}_{t}^{\ind}]_k(\cdot) - \sum_{k=1}^K\prod_{\ind=1}^{\Ind} g_{\ind,k}(a_{\ind}(\y_{\ind}^1),\cdots, a_{\ind}(\y^{\ind}_{m_{\ind}})) \cdot u_k(\cdot) \Big\|_{H^2(\Omega_i)}\\
\leq& \sum_{k=1}^K (M_k+\delta)^{I+1} - M_k^{I+1}\leq \frac{\varepsilon}{2}.
\end{aligned}
\end{equation}
Finally, combining definition (\ref{es:constructG}), estimates (\ref{es:tildeG}) and (\ref{es:nn}), we conclude that for $\ind=1,\cdots, \Ind$,
\begin{equation*}
\begin{aligned}
&\sup_{a\in T} \norm{\mathcal{G}(a)(\cdot) - \mathcal{S} \left( \mathcal{N}_{b_1}(a(\y_1^1),\cdots, a(\y^1_{m_1})) \odot \mathcal{N}_{b_2}(a(\y^2_1),\cdots, a(\y^2_{m_2})) \odot \mathcal{N}^{\ind}_{t}(\cdot) \right) }_{H^2(\Omega_{\ind})} \\
\leq& \sup_{a \in T} \norm{\mathcal{\tilde{G}}(a|_{\Omega_1}, \cdots, a|_{\Omega_{\Ind}})(\cdot) - \sum_{k=1}^K\prod_{\ind=1}^{\Ind} g_{\ind,k}(a_{\ind}(\y_{\ind}^1),\cdots, a_{\ind}(\y^{\ind}_{m_{\ind}})) \cdot u_k(\cdot) }_{H^2(\Omega_{\ind})}\\
& + \sup_{a\in T}\norm{\sum_{k=1}^K \prod_{\ind=1}^{\Ind}[\mathcal{N}_{b_{\ind}}]_k(a_{\ind}(\y_{\ind}^1),\cdots, a_{\ind}(\y^{\ind}_{m_{\ind}})) \cdot [\mathcal{N}_{t}^{\ind}]_k(\cdot) - \sum_{k=1}^K\prod_{\ind=1}^{\Ind} g_{\ind,k}(a_{\ind}(\y_{\ind}^1),\cdots, a_{\ind}(\y^{\ind}_{m_{\ind}})) \cdot u_k(\cdot) }_{H^2(\Omega_i)}\\
\leq & \frac{\varepsilon}{2} + \frac{\varepsilon}{2} = \varepsilon,
\end{aligned}
\end{equation*}
which completes the proof.
\end{proof}

\bibliographystyle{elsarticle-num}
\bibliography{ref}
\end{document}